\theoremstyle{plain}
\newtheorem{thm}{Theorem}[section]
\newtheorem{prop}[thm]{Proposition}
\newtheorem{cor}[thm]{Corollary}
\newtheorem{lem}[thm]{Lemma}
\theoremstyle{definition}
\newtheorem{exa}[thm]{Example}
\newtheorem{rem}[thm]{Remark}
\newtheorem{defn}[thm]{Definition}
\def\dim{\mathop{\mathrm{dim}}\nolimits}
\def\Hom{\mathop{\mathrm{Hom}}\nolimits}
\newcommand{\lra}{\longrightarrow}
\newcommand{\ra}{\rightarrow}
\newcommand{\Q}{{\Bbb Q}}
\newcommand{\R}{{\Bbb R}}
\newcommand{\Z}{{\Bbb Z}}
\newcommand{\N}{{\Bbb N}}
\newcommand{\A}{{\cal A }}
\newcommand{\id}{{\mathrm{id}}}
\newcommand{\C}{\mathbb{C}}
\newcommand{\pc}[2]{\mbox{$\begin{array}{c}
\includegraphics[scale=#2]{#1.eps}
\end{array}$}}
\begin{document}
\large
\begin{center}{\bf\LARGE de Rham theory and cocycles of cubical sets from smooth quandles}
\end{center}

\vskip 1.5pc

\begin{center}{Takefumi Nosaka\footnote{
E-mail address: {\tt nosaka@math.titech.ac.jp}
}}\end{center}

\vskip 1pc

\begin{abstract}
We show a de Rham theory for cubical manifolds, and study rational homotopy type of the classifying spaces of smooth quandles.
We also show that secondary characteristic classes in \cite{Dup2,DK} produce
cocycles of quandles.
\end{abstract}

\begin{center}
\normalsize
{\bf Keywords}: Cubical sets, quandle, de Rham theory, secondary characteristic classes, invariant theory.
\end{center}

\large
\baselineskip=16pt
\section{Introduction}\label{Adem}
Characteristic classes in topology are interpreted as cohomology classes of the classifying space of a Lie group $G$. According to Chern-Weil theory, the classes recovered from some invariant theory. Dupont \cite{Dup} used simplicial manifolds to study the classifying spaces, and reformulate universally the Chern-Weil theory.
Moreover, according to the enriched Chern-Weil theory \cite{Dup2,DK},
the characteristic classes (with a condition) produce
cocycles of $G^{\delta} $, where $G^{\delta}$ is the Lie group $G$ with descrete topology.
This approach recovers some of secondary characteristic classes, including the Chern-Simon class. 

Meanwhile,
a quandle \cite{Joy,Mat} is a set with a certain binary operation;
a typical example is 
a homogenous set as in symmetric space (see \S\S \ref{das}--\ref{a2das23} for the details).
Furthermore, as an analog of the classifying space of a group,
Fenn, Rourke, and Sanderson \cite{FRS1} defined a space $BX$ from a quandle $X$,
which is called the rack space, 
and is cubically constructed from a $\square$-set; cocycles in the cohomology provided applications to low-dimensional topology (see \cite{CJKLS,CKS}), e.g., including the Chern-Simon invariant \cite{IK} and $K_2$-invariant \cite{Nos3} of links.
However, in most papers on quandles, $X$ was assumed to be equipped with descrete topology.

In this paper, we focus on the situation where a quandle $X$ has a manifold structure as a homogenous space, and we study the cohomology of $BX$.
After Section \ref{das} reviews quandles with manifold structure, Section \ref{a2das23} discusses differential forms on cubical manifolds, and shows a de Rham theory on $BX$ (Theorem \ref{thm2}): This result is a cubical translation of \cite{Dup}. As a corollary, Section \ref{a2das} completely determines the rational cohomology of the rack
space $BX$, where the cohomology of $X$ satisfies some conditions.
Furthermore, for such an $X,$ Section \ref{r4das} provides a formula of computing the rational homotopy type of $BX$, as in Milnor-Moore theorem; see Theorem \ref{prp:}.

In Sections \ref{A33}--\ref{A326}, we will examine a contrast between the cohomology groups of $BX$ and $BX^{\delta}$, where $X^{\delta}$ means the discrete topology of $X$. 
First, we show (Theorem \ref{co43e}) that if $X$ is compact and ``semi-homogenous", every $\R$-value continuous cocycle of $BX^{\delta}$ is trivial (cf. the computation of second (co)-homology of $BX^{\delta}$; see Appendix \ref{B1}). 
To obtain non-trivial cocycles, the last section \ref{A326} examines cocycles with the coefficient $\mathbb{C}/\Z$ modulo $\Z$, where we use a chain map of Inoue-Kabaya \cite{IK} to bridge 
the complex of $BX^{\delta}$ and the enriched Chern-Weil theory.
As a result, we show (Proposition \ref{d4e5}) that
the pullback of every secondary characteristic class in the sense of \cite{Dup2,DK} yields a $\C/\Z$-value cocycle of $BX^{\delta}$. Hence, in doing so, we hope that this proposition produces many rack cocycles of a quandle $Y$, when $Y$ is a subquandle of $X$.

\subsection*{Acknowledgment}
The author sincerely expresses his gratitude to Katsumi Ishikawa and Masahico Saito
for valuable comments on the early draft of this paper. He also thanks Hiroshi Tamaru for referring him to the papers \cite{Nagano,NT}.

\section{Preliminaries on smooth quandles}\label{das}
We start reviewing quandles and smooth quandles.
A {\it quandle} \cite{Joy,Mat} is a set $Q$ with a binary operation $\lhd :Q^2 \ra Q$ satisfying the following three:

\vskip 0.7pc
\noindent
(Q1)
For any $x \in Q, x \lhd x = x$,

\noindent
(Q2) For any $x, y \in Q$, there exists a unique element $z \in Q$ such that $z\lhd y = x$,

\noindent(Q3) For any $x, y, z \in Q$, $(x \lhd y)\lhd z = (x\lhd z)\lhd (y\lhd z)$.

\vskip 0.7pc
A {\it smooth quandle} is a $C^{\infty}$-manifold $Q$ with a $C^{\infty}$-map $\lhd : Q^2 \ra Q$
satisfying (Q1), (Q3) and that $ (\bullet \lhd x) : Q \ra Q$ is diffeomorphic for any $x \in Q$.
Let $\mathrm{Inn}(Q)$ be the subgroup of $\mathrm{Diff}(Q)$ generated by $ (\bullet \lhd y)$, where $y$ runs over $Q$.
We equip $\mathrm{Inn}(Q) \subset \mathrm{Diff}(Q)$ with the compact open topology.
A quandle $Q$ is said to be {\it transitive}, if the action of $\mathrm{Inn}(Q)$ on $Q$ is transitive; see \cite{Joy,Mat}. 
A quandle $Q$ is {\it of type} $n$, if there exists $n \in \Z $ which is the minimal number satisfying $x \lhd^n y=x$ for any $x, y \in Q$.

\begin{exa}\label{kkk3}Let $X$ be a symmetric space, i.e., a $C^{\infty}$-manifold equipped with a Riemannian metric such that each point $y \in X$ admits an isometry
$s_y : X \ra X$ that reverses every geodesic line $\gamma : (\mathbb{R}, 0) \ra (X,y)$, meaning that
$s_y \circ \gamma (t) = \gamma ( - t)$.
\index{symmetric space@symmetric space}
Then, $X$ has a quandle structure of type $2$ defined by $x \lhd y := s_y (x)$.
In addition, similar Riemannian manifolds with quandle structure of type $>2$ are studied in \cite{Kow} as {\it generalized symmetric spaces}. 
\end{exa}
\begin{exa}[\cite{Joy,Mat}]\label{kkk}
As an important example in this paper, we will see that transitive quandle structures turn to be good operations defined on homogenous
spaces.
Let $G$ be a Lie group, and $H$ be a closed subgroup.
If $z_0 \in G$ commutes with any $h\in H$,
then the homogenous space $H \backslash G$ has a quandle structure given by
\begin{equation}\label{qkettei} [x] \lhd[y] := [z_0^{-1} x y^{-1} z_0 y],
\end{equation}
for representatives $x,y \in G$. In what follows, we write $(G,H,z_0)$ for such a transitive quandle.
The author should keep the map $\kappa: H \backslash G \ra G$ which sends $[x]$ to $ x^{-1}z_0 x$ in mind.

Conversely, we will explain that if $Q$ is a smooth quandle and is transitive, $Q$ is reduced to some $(G,H,z_0)$.
For $x_0 \in Q$, let $\mathrm{Stab}(x_0)\subset G$ be the stabilizer subgroup of $x_0$.
We equip the group $\mathrm{Inn} (Q)$ with a quandle operation given by (\ref{qkettei}).
Then it is known \cite[Theorem 7.1]{Joy} that the natural map
\begin{equation}\label{gxgx} \mathrm{Inn}(Q) \lra Q \ \ \ \ \ \ \mathrm{given \ by} \ \ \ g \longmapsto x_0 \cdot g
\end{equation}
is a quandle homomorphism, which induces the quandle isomorphism $ \mathrm{Stab}(x_0) \backslash \mathrm{Inn}(Q) \cong Q$.
Moreover, Ishikawa \cite[Theorem 2.4]{Ishi} showed that $\mathrm{Inn}(Q) $ is a Lie group.
In conclusion, the structure of the smooth quandle $Q$ is determined by the Lie groups $\mathrm{Stab}(x_0) \subset \mathrm{Inn}(Q) $.

\end{exa}

Accordingly, throughout this paper, we mainly focus on such smooth quandles $ (G,H,z_0)$, which are transitive quandles.


Moreover, we now observe the situation that $G $ is compact.
Then $G$ has the Haar measure $dg$.
By quotienting $dg$, the smooth quandle $Q$ has a metric such that
$ (\bullet \lhd x): Q\ra Q$ is isometric for any $x \in Q$.
In other words, such a smooth quandle $Q$ is called a metrizable $s$-manifolds in the book \cite{Kow}.
Hence, the topological type of such a $Q$ is restricted, and is classified in some cases.
For example, if $\pi_1(Q)=0$, the type is of finite order, and $G$ is a simple Lie group, then $Q$ is a formal space in the sense of the rational homotopy theory; see \cite{KT} and references therein.

\section{Preliminaries: cubical manifolds and differential $n$-forms}\label{a2das23}

We introduce cubical manifolds, modifying the concept of $\square$-sets of Fenn-Rourke-Sanderson \cite{FRS1}. 
The discussion in this section is a cubical analogy of simplicial manifolds \cite[\S 2]{Dup}.
A {\it cubical manifold} is a sequence of
$C^{\infty}$-manifolds $\{ X_p \}_{p \in \N} $ together with {\it face} $C^{\infty}$-{\it maps} \ $\delta^{\varepsilon}_{i}: X_{p} \ra X_{p-1}$, for $ \varepsilon \in \{ 0,1\}$ and $1 \leq i \leq p$, satisfying
\[ \delta^{\eta}_{j-1} \circ \delta^{\varepsilon}_{i} = \delta^{\varepsilon}_{i} \circ \delta^{\eta}_{j }, \ \ \ \mathrm{for \ any} \ 1 \leq i <j \leq p\ \ \mathrm{and} \ \varepsilon , \eta \in \{ 0,1\}.\]
Let $I$ be the interval $[0,1] \subset \R$, and $I^p$ be the $p$-cube.
Dually, for $ 1 \leq i \leq p$ and $ \varepsilon \in \{ 0,1\}$,
we consider the map
$$\delta^{\varepsilon}_{i}: I^{p-1} \ra I^{p} \ \ \mathrm{ defined \ by }\ \ \delta^{\varepsilon}_{i} (t_1, \dots, t_{p-1})= (t_1,\dots, t_{i-1} , \varepsilon, t_{i}, \dots, t_{p-1} ).$$
Then the ({\it fat}) {\it realization} $\|X \|$ of a cubical manifold $X$ is defined to be the quotient space of
$ \bigsqcup_p I^p \times X_p $ subject to the relation $( \delta^{\varepsilon}_{i}(\mathfrak{t}) ,x) \sim (\mathfrak{t}, \delta^{\varepsilon}_{i}(x)),$
where $ \mathfrak{t} \in I^{p-1} $ and $x \in X_p$ with $i= \{ 0, \dots, p\} $ and $ \varepsilon \in \{ 0,1\}$.
\begin{exa}[Rack space]\label{rs}
Fenn-Rourke-Sanderson \cite{FRS1} introduced a classifying space as a cubical set, which is called {\it the rack space}. We will give the rack space of manifold version.
Fix a smooth quandle $(G,H,z_0)$ as in Example \ref{kkk}, and a manifold $Y$ which is acted on by $G$ (possibly $Y=\{ \mathrm{pt}.\}$, $Y=Q$ or $ Y=G$).
Then, we define $X_p$ to be $Y \times Q^p$, and define $ \delta_{j}^{\varepsilon}$ by
\[ \delta_{j}^0(y,x_1, \dots, x_p) = (y,x_1, \dots, x_{j-1}, x_{j+1},\dots, x_p), \]
\[ \delta_{j}^1(y,x_1, \dots, x_p) = (y \cdot \kappa (x_j) ,x_1 \lhd x_j, \dots, x_{j-1} \lhd x_j, x_{j+1}, \dots, x_p). \]
Then, the pair ($X_*, \delta_{*}^{\varepsilon}$) is a cubical manifold. Moreover, the realization $| \! |X| \! |$ is exactly the rack space defined in \cite{FRS1,FRS2}. We will denote $| \! |X| \! |$ by $B_Y Q$. If $Y$ is a singleton, we write $BQ$ for $B_YQ$ for simplicity. We remark that the canonical projection $B_YQ \ra BQ$ is a fibration with fiber $Y.$
\end{exa}
Next, we will establish terminology of $C^{\infty}$-forms on cubical manifolds. 
\begin{defn}\label{form}
(1) Let $\A ^n( I^p \times X_p)$ be the set of $n$-forms on $I^p \times X_p$ of $C^{\infty}$-class which are extended to $n$-forms on $\R^p \times X_p $.

\noindent
(2) An {\it $n$-form} $\varphi$ on a cubical manifold is
a sequence of $n$-forms $\phi^{(p)} \in \A ^n(I^p \times X_p)$ satisfying the conditions
$ (\delta^{\varepsilon}_{i} \times \id)^* \phi^{(p)}= (\id \times \delta^{\varepsilon}_{i} )^* \phi^{(p-1)} $ for any $i\in \{1, \dots, p\}$ and $ \varepsilon \in \{ 0,1\}$.

\noindent
(3) We denote by $ A^n (X)$ the set of all $n$-forms on $X$.
\end{defn}
\noindent
Then, the exterior differential $d$ and the wedge product on $ \A ^n( I^p \times X_p)$ can be extended to those on $A^n (X)$. Thus, $ A^* (X)$ is made into a differential graded algebra.

Next, we give bigraded complexes. 
Let $q_{1} : I^p \times X_p \ra I^p $ and
$q_{2} : I^p \times X_p \ra X_p$ be the natural projections.
Given a cubical manifold $X$, we first decompose $ A ^* (X)$ into a direct sum $A ^n (X)= \bigoplus_{n=k+\ell } A^{k,\ell}(X)$,
where $ A^{k,\ell }(X)$ is composed of the forms $\varphi$ of type $(k,\ell )$,
i.e., $\varphi$ restricted to $ I^p \times X_p$ is presented by $ q_1^* (\phi^{(k)}_I ) \times q_2^* (\phi^{(\ell)}_X ) $
for some $ \phi^{(k)}_I \in \A^k (I^p)$ and $ \phi^{(\ell)}_X \in \A^{\ell} (X_p)$.
Also let $d_{\square}$ (resp. $d_X$) denote the pullback of exterior differential on $\A^* (I^p)$ (resp. on $\A^* (X_p) $).
Thus, we have a double complex $(A^{k,\ell}(X), d_{\square}, d_X )$,
and the total complex $(A^*(X), d_{\rm tot})$, where $d_{\rm tot}= d_{\square}+ d_X $.
Further, we can define another double complex $(\mathcal{A}^{k,\ell}(X), \delta, d_X )$, where $\mathcal{A}^{k,\ell}(X) = \A^\ell(X_k) $ and $\delta= \sum_{i=1}^p (-1)^i (\delta_i^0 -\delta_i^1 ) .$

Then, we later give an isomorphism between the (double) complexes
\begin{thm}[{A cubical version of \cite[Theorem 2.3]{Dup}}]\label{thddm1}
Assume that each $X_p$ is a paracompact Hausdorff space.
For any $\ell \in \N $ the chain complexes $(A^{*,\ell}(X), d_{\square})$ and
$(\mathcal{A}^{*,\ell}(X), \delta)$ are naturally chain homotopy equivalent.
To be precise, there is a map $\mathcal{J}: A^{k,\ell}(X) \ra \mathcal{A}^{k,\ell }(X)$ which gives a homotopy equivalence.
\end{thm}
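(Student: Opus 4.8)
The plan is to transcribe Dupont's proof of \cite[Theorem 2.3]{Dup} from the simplex $\Delta^k$ to the cube $I^k$. First I would define $\mathcal{J}: A^{k,\ell}(X) \ra \mathcal{A}^{k,\ell}(X)=\mathcal{A}^\ell(X_k)$ by fibre integration along $q_2 : I^k \times X_k \ra X_k$: writing the $p=k$ component of $\varphi \in A^{k,\ell}(X)$ as $\phi^{(k)}=q_1^*(\alpha)\wedge q_2^*(\beta)$ with $\alpha \in \mathcal{A}^k(I^k)$ and $\beta \in \mathcal{A}^\ell(X_k)$, set $\mathcal{J}(\varphi)=\big(\int_{I^k}\alpha\big)\,\beta$, extended linearly to the general (non-pure) case. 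Since a form of type $(k,\ell)$ is top-degree in the cube direction precisely on $I^k\times X_k$, the recipe uses only the level $p=k$ and is well defined; it is the cubical analogue of Dupont's integration map.

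Next I would check that $\mathcal{J}$ is a chain map, $\mathcal{J}\circ d_{\square}=\delta\circ\mathcal{J}$. For $\varphi\in A^{k-1,\ell}(X)$ with $p=k$ component $q_1^*(\alpha)\wedge q_2^*(\beta)$, $\alpha\in\mathcal{A}^{k-1}(I^k)$, the $p=k$ component of $d_{\square}\varphi$ is $q_1^*(d\alpha)\wedge q_2^*(\beta)$ with $d\alpha$ a top form on $I^k$, so Stokes' theorem gives $\int_{I^k}d\alpha=\sum_{i=1}^k(-1)^{i-1}\big(\int_{I^{k-1}}(\delta_i^1)^*\alpha-\int_{I^{k-1}}(\delta_i^0)^*\alpha\big)$. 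The compatibility condition in Definition~\ref{form}(2), applied at level $p=k$, identifies $q_1^*((\delta_i^\varepsilon)^*\alpha)\wedge q_2^*(\beta)=(\delta_i^\varepsilon\times\id)^*\phi^{(k)}=(\id\times\delta_i^\varepsilon)^*\phi^{(k-1)}$ on $I^{k-1}\times X_k$, and integrating the right-hand side over the fibre $I^{k-1}$ yields $(\delta_i^\varepsilon)^*\mathcal{J}(\varphi)$. Collecting the Stokes signs reproduces $\sum_{i=1}^k(-1)^i\big((\delta_i^0)^*-(\delta_i^1)^*\big)\mathcal{J}(\varphi)=\delta\mathcal{J}(\varphi)$ on the nose. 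Note that this computation involves only the components of $\varphi$ in the two bidegrees $(k-1,\ell)$ and $(k,\ell)$, so it is entirely local.

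For the homotopy equivalence I see two routes. The direct one, following Dupont, produces an explicit quasi-inverse $\mathcal{E}:\mathcal{A}^\ell(X_k)\ra A^{k,\ell}(X)$ — whose $p=k$ component is $q_1^*(dt_1\wedge\cdots\wedge dt_k)\wedge q_2^*(\psi)$ and whose higher components are the cubical analogue of Dupont's extension operator, interpolating the pullbacks of $\psi$ under the various composite face maps over $I^p$ by a partition of unity — together with a chain homotopy $s$ on $(A^{*,\ell}(X),d_{\square})$, assembled from the one-variable Poincaré operators $\omega\mapsto\int_0^{t_j}\iota_{\partial_{t_j}}\omega$, such that $\id-\mathcal{E}\mathcal{J}=d_{\square}s+s\,d_{\square}$ and $\mathcal{J}\mathcal{E}=\id$. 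The indirect route is to verify, by the same one-variable Poincaré argument, that $\mathcal{J}$ induces an isomorphism on cohomology for every $\ell$; since $(A^{*,\ell}(X),d_{\square})$ and $(\mathcal{A}^{*,\ell}(X),\delta)$ are bounded-below cochain complexes of $\R$-vector spaces, a quasi-isomorphism between them is automatically a chain homotopy equivalence, which gives the theorem (though not the explicit homotopy inverse).

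The hard part, on either route, is the cubical bookkeeping: showing that $\mathcal{E}$ actually lands in the space of compatible families and that $s$ (equivalently the one-variable Poincaré operator) descends to compatible families, which reduces to the identities $\delta^{\eta}_{j-1}\circ\delta^{\varepsilon}_{i}=\delta^{\varepsilon}_{i}\circ\delta^{\eta}_{j}$ for $i<j$ together with the fact that $\int_0^{t_j}\iota_{\partial_{t_j}}$ commutes with pullback along the faces not involving the $j$-th cube coordinate and has a controlled interaction with the two faces $t_j=0,1$. This is exactly where the paracompact Hausdorff hypothesis on each $X_p$ enters: it supplies the partitions of unity on $\R^p\times X_p$ needed both to build $\mathcal{E}$ and to keep all the forms in sight extendable to $\R^p\times X_p$, as required by Definition~\ref{form}(1). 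Once these compatibilities are established, the two homotopy identities follow from Stokes' theorem on $I^p$.
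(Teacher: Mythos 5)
Your proposal is essentially sound, but it takes a genuinely different route from the paper. You re-run Dupont's argument intrinsically in the cubical category: $\mathcal{J}$ is fibre integration of the level-$k$ component over $I^k$, the chain-map identity comes from Stokes plus the compatibility condition of Definition \ref{form}(2), and the homotopy inverse is built from cubical analogues of Whitney's elementary forms and one-variable Poincar\'e operators (or, more cheaply, from the fact that a quasi-isomorphism of cochain complexes of $\R$-vector spaces is a homotopy equivalence). The paper instead avoids redoing any of this analysis: following Clauwens it triangulates the cubical manifold, producing a simplicial manifold $T(X)$ whose $k$-simplices are pairs $(x;S)$ with $S$ a $k$-partition of $[n]$, proves that $\Phi\colon \|X\|\ra\|T(X)\|_{\Delta}$ is a homeomorphism (Lemma \ref{le1}) and that $\Phi^*,\Psi^*$ give bigraded isomorphisms of the form complexes (Lemma \ref{le2}), and then defines $\mathcal{J}$ as the composite $\Phi^*\circ\mathcal{T}\circ\Psi^*$ with $\mathcal{T}$ Dupont's simplicial chain equivalence. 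Your route buys an explicit, intrinsic cubical integration map and a self-contained proof, at the cost of the elementary-form/extension bookkeeping you flag as the hard part (which is real work, though standard); the paper's route buys brevity and, via the same maps $\Phi,\Psi$, the multiplicativity statement of Theorem \ref{thm2} essentially for free. One small correction: paracompactness of the $X_p$ is not really what makes your extension operator $\mathcal{E}$ possible --- the interpolation in the cube direction is by explicit polynomial weights, not partitions of unity on $X_p$; the hypothesis is rather inherited from Dupont's theorem and is what feeds the de Rham comparison on each $X_p$ needed for the corollary identifying $H^*(A^*(X),d_{\rm tot})$ with $H^*(\|X\|;\R)$.
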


Instead of giving the proof (see Appendix \ref{A1}), we mention a corollary from the spectral sequences
associated with the two double complexes.
Consider the filtering with respect to the first index of the double complexes $A^{ **}(X)$ and $\mathcal{A}^{**}(X)$;
we have the spectral sequences $I(A)_r^{**}$ and $I(\mathcal{A})_r^{**}$, respectively. 
In parallel, we have other spectral sequences $II(A)_r^{**}$ and $II(\mathcal{A})_r^{**}$ by filtering with respect to the second index.
As a consequence of Theorem \ref{thddm1}, as a de Rham theory of cubical sets,
the de Rham cohomology of $A^*(X)$ is isomorphic to the ordinary cohomology $H^*(| \! |X| \! |;\mathbb{R})$ of the fat realization $| \! |X| \! |$. To be precise,
\begin{cor}
The map $\mathcal{J}$ induces natural isomorphisms $I(A)_r^{**} \cong I(\mathcal{A})_r^{**}$ for $r\geq 2 $ and $II(A)_r^{**}\cong II(\mathcal{A})_r^{**}$ for $r \geq 1$.
In particular, they induce a canonical isomorphism from the cohomology of the total complexes $\mathcal{K}_X: H^*(A^*(X),d_{\rm tot}) \cong H^*(| \! |X| \! |;\R)$.
\end{cor}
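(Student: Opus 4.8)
The plan is to deduce the Corollary from Theorem \ref{thddm1} by a standard comparison-of-spectral-sequences argument, and then identify one of the resulting cohomologies with $H^*(\|X\|;\R)$.

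\medskip

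\textbf{Step 1: The isomorphism of spectral sequences.} Theorem \ref{thddm1} gives, for each fixed $\ell$, a chain homotopy equivalence $\mathcal{J}\colon (A^{*,\ell}(X), d_\square) \ra (\mathcal{A}^{*,\ell}(X), \delta)$. By naturality (the statement asserts $\mathcal{J}$ is natural, so in particular it commutes with the second differential $d_X$ up to the stated homotopies being $d_X$-compatible), $\mathcal{J}$ is a morphism of double complexes $A^{**}(X) \ra \mathcal{A}^{**}(X)$. Now filter each double complex by the first index. On the $E_1$-page of $I(A)$ one has the $d_\square$-cohomology of the columns, and likewise $E_1$ of $I(\mathcal{A})$ is the $\delta$-cohomology; since $\mathcal{J}$ is a quasi-isomorphism in the first variable for every $\ell$, it induces an isomorphism $I(A)_1^{**} \cong I(\mathcal{A})_1^{**}$, hence $I(A)_r^{**} \cong I(\mathcal{A})_r^{**}$ for all $r \geq 1$, and in particular for $r \geq 2$. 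For the second filtration, the $E_0$-pages already agree up to the $d_\square$-direction: filtering by the second index, $E_1$ of $II(A)$ is the $d_X$-cohomology of each row $A^{k,*}(X) = \A^*(I^k\times X_k)$-type object, and $\mathcal{J}$ being the identity-like map in the $X_k$-directions (it only modifies the cube coordinates by fibre integration along $I^k$) means the comparison already holds at $r=1$; more carefully, one checks $\mathcal{J}$ induces an iso on $II(\dots)_1$ because a chain homotopy equivalence in one variable of a double complex always yields an iso of the other spectral sequence from $E_1$ on. This gives $II(A)_r^{**} \cong II(\mathcal{A})_r^{**}$ for $r \geq 1$.

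\medskip

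\textbf{Step 2: Convergence and the total complexes.} Both double complexes are first-quadrant (concentrated in $k,\ell \geq 0$), so both filtrations are bounded and the associated spectral sequences converge to the cohomology of the respective total complexes $(A^*(X), d_{\rm tot})$ and $(\mathcal{A}^*(X), \delta + d_X)$. An isomorphism of spectral sequences from a finite page onward, compatible with the filtrations, yields an isomorphism of the abutments; hence $\mathcal{J}$ induces $H^*(A^*(X), d_{\rm tot}) \cong H^*(\mathcal{A}^*(X), \delta + d_X)$.

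\medskip

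\textbf{Step 3: Identifying $H^*(\mathcal{A}^*(X),\delta+d_X)$ with $H^*(\|X\|;\R)$.} Here I would invoke the ordinary de Rham theorem fibrewise: for each fixed $k$, $H^\ell(\A^*(X_k), d_X) = H^\ell(X_k;\R)$ since $X_k$ is a paracompact Hausdorff manifold (hypothesis of Theorem \ref{thddm1}). Thus the $II(\mathcal{A})_1$-page is $H^\ell(X_k;\R)$ with differential $\delta$, which is exactly the $E_1$-term of the skeletal/cubical filtration spectral sequence computing $H^*(\|X\|;\R)$ for the fat realization of the cubical space $X$ — this is the cubical analogue of the standard simplicial-space spectral sequence, and is precisely the content alluded to as ``a cubical version of \cite[Theorem 2.3]{Dup}''. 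Comparing this with the spectral sequence of the singular cochain complex of $\|X\|$ filtered by cube-dimension gives the canonical isomorphism $H^*(\mathcal{A}^*(X), \delta + d_X) \cong H^*(\|X\|;\R)$. Composing with Step 2 defines $\mathcal{K}_X$.

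\medskip

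\textbf{Main obstacle.} The delicate point is the naturality/compatibility needed to promote the per-$\ell$ chain homotopy equivalence of Theorem \ref{thddm1} into an honest morphism of double complexes (so that the comparison of the \emph{second} spectral sequences, and the convergence of abutments, goes through cleanly); in the simplicial case this is handled by Dupont via an explicit fibre-integration operator over the simplex, and I expect the cubical $\mathcal{J}$ to be the analogous fibre integration over $I^k$, whose commutation with $d_X$ is then a direct check. The remaining identification in Step 3 is routine once one accepts the cubical de Rham theorem for a single manifold and the standard fact that the fat realization's cohomology is computed by the evident bicomplex.
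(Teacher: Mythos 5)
Your overall route is the intended one: the paper offers no separate proof of this corollary, and it is meant to follow from Theorem \ref{thddm1} exactly as you outline — compare the two filtration spectral sequences of the double complexes, use first-quadrant convergence to pass to the total complexes, and identify the abutment of $(\mathcal{A}^{*,*}(X),\delta,d_X)$ with $H^*(\|X\|;\R)$ by the levelwise de Rham theorem together with the skeletal filtration of the fat realization (this is where paracompactness of each $X_k$ enters). However, Step 1 contains a genuine error: you have interchanged the two filtrations. Filtering with respect to the \emph{first} index, the $E_0$-differential is $d_X$, so $I(A)_1^{k,\ell}=H^\ell(A^{k,*}(X),d_X)$ while $I(\mathcal{A})_1^{k,\ell}=H^\ell_{dR}(X_k)$; the map $\mathcal{J}$ is in general \emph{not} an isomorphism on this page, which is precisely why the corollary asserts $I(A)_r^{**}\cong I(\mathcal{A})_r^{**}$ only for $r\geq 2$. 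Your claim that $\mathcal{J}$ is already an isomorphism on $I_1$ (hence for all $r\geq1$) is an unproved strengthening of the statement. Symmetrically, it is the \emph{second}-index filtration whose $E_1$-page is the $d_{\square}$- (resp.\ $\delta$-) cohomology of the rows, so Theorem \ref{thddm1} gives $II(A)_1\cong II(\mathcal{A})_1$ directly; instead, you justify the $II$ case by the principle that a chain homotopy equivalence in one variable ``always yields an iso of the other spectral sequence from $E_1$ on,'' which is false as a general statement — for the other filtration one only gets an isomorphism from $E_2$ on, and even that requires a compatibility check.

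That compatibility is exactly the point you flag as the main obstacle but do not carry out: to get $I(A)_2\cong I(\mathcal{A})_2$ one needs $\mathcal{J}$ \emph{and its chain homotopies} to commute with $d_X$, so that the homotopies descend to the $E_1$-page of the first filtration, where $\mathcal{J}$ remains a homotopy equivalence with respect to $d_1$ and hence induces an isomorphism on $E_2$. (The word ``naturally'' in Theorem \ref{thddm1} refers to naturality in $X$, not to $d_X$-linearity, so this must be checked, not quoted.) In the paper this holds because $\mathcal{J}$ is constructed in Appendix \ref{A1} as $\Phi^*\circ\mathcal{T}\circ\Psi^*$, where $\mathcal{T}$ is Dupont's fibre integration over the simplex and $\Phi,\Psi$ come from the triangulation of the cube; all of these operate only in the simplex/cube coordinates and are therefore $d_X$-linear, as are Dupont's homotopy operators. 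With the filtrations relabelled correctly and this $d_X$-compatibility supplied, your Steps 2 and 3 go through as written and yield $\mathcal{K}_X$.
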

\noindent
Moreover, we will show the multiplication, although we defer the proof into Appendix \ref{A1}.
\begin{thm}[{Cubical version of \cite[Theorem 2.14]{Dup}}]\label{thm2}Suppose that each $X_p $ is a paracompact Hausdorff space. Then the isomorphism $\mathcal{K}_X: H^*(A^*(X),d) \cong H^*(| \! |X| \! |;\R)$ is multiplicative where the multiplication on the left (resp. right) hand side is induced by the wedge-product (resp. the cup-product).
\end{thm}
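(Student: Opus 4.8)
The plan is to realize $\mathcal{K}_X$ at the cochain level by a concrete integration map and then prove multiplicativity by the same mechanism that works for the ordinary de Rham theorem. Since $\| X \|$ is glued out of the pieces $I^p \times X_p$, a compatible family $\varphi \in A^n(X)$ determines, on each cell, an honest $n$-form, and for a $C^{\infty}$ singular simplex $\sigma \colon \Delta^n \to \| X \|$ that (after a subdivision adapted to the skeletal filtration) lands in a single chart, $\sigma^* \varphi$ makes sense; set $\Psi(\varphi)(\sigma) = \int_{\Delta^n} \sigma^* \varphi$. First I would check that $\Psi \colon A^*(X) \to C^*_{C^{\infty}}(\| X \|;\R)$ is well defined --- independence of the subdivision and of the chart uses exactly the gluing relation $(\delta^{\varepsilon}_{i}\times \id)^* \phi^{(p)} = (\id \times \delta^{\varepsilon}_{i})^* \phi^{(p-1)}$ of Definition \ref{form} --- and that $\Psi$ is a chain map, $\Psi(d\varphi) = \pm \partial^* \Psi(\varphi)$, which is Stokes' theorem applied cell by cell. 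Comparing with the argument behind the Corollary above, $\Psi$ induces $\mathcal{K}_X$ on cohomology; equivalently one may work with the double complex $\mathcal{A}^{**}(X)$, factoring $\mathcal{K}_X$ through $\mathcal{J}$ and the (standard) identification $H^*(\mathcal{A}^{**}(X), \delta + d_X) \cong H^*(\| X \|;\R)$.

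The heart of the matter is that $\Psi$ (equivalently $\mathcal{J}$, which is fibre integration over the cube $I^k$) is \emph{not} an algebra map on the nose: integrating a wedge product $\varphi \wedge \psi$ of forms of type $(k,\ell)$ and $(k',\ell')$ over $I^{k+k'}$ does not equal the cubical cup product of the integrals, and multiplicativity can only hold up to chain homotopy. Here I would follow the proof of \cite[Theorem 2.14]{Dup} line for line, with cubes $I^p$ in place of simplices $\Delta^p$: one interpolates between $\Psi(\varphi \wedge \psi)$ and $\Psi(\varphi) \cup \Psi(\psi)$ by a Whitney-form construction, equivalently by the prism/shuffle chain homotopy relating the diagonal $\Delta^n \to \Delta^n \times \Delta^n$ to its Alexander--Whitney decomposition, the point being that this homotopy is natural and compatible with faces, so it descends from each $I^p \times X_p$ to $\| X \|$ and hence to $A^*(X)$ with its total differential $d_{\rm tot} = d_{\square} + d_X$. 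An alternative, possibly cleaner, assembly is local-to-global: over each $X_p$ the statement reduces to the classical multiplicativity of the de Rham isomorphism, and one glues these using the multiplicative second-filtration spectral sequences $II(A)^{**}_r \cong II(\mathcal{A})^{**}_r$ of the Corollary above, noting that the second filtration of a bigraded differential graded algebra is multiplicative and that on the $E_1$-page only fibre integration over the contractible cube $I^k$ intervenes, which is multiplicative after the usual Poincar\'e-lemma normalization.

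The main obstacle I anticipate is precisely the construction of the homotopy witnessing multiplicativity in a way that is simultaneously compatible with the exterior differential, natural in the cubical face maps $\delta^{\varepsilon}_{i}$ (so that it glues over $\| X \|$), and compatible with $d_{\rm tot}$. Paracompactness of each $X_p$, which is hypothesized, lets one deploy partitions of unity and \v{C}ech arguments to reduce the global and total-differential compatibilities to the local case; what remains delicate is tracking the signs through the cubical identities $\delta^{\eta}_{j-1}\delta^{\varepsilon}_{i} = \delta^{\varepsilon}_{i}\delta^{\eta}_{j}$ when transporting the simplicial bookkeeping of \cite[Theorem 2.14]{Dup} to the cubical setting. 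I expect no conceptual surprises beyond this combinatorial translation.
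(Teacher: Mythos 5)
Your route is viable in outline but it is genuinely different from the paper's, and the comparison is worth making explicit. The paper does not redo Dupont's analysis in the cubical category at all: following \cite{Cla}, it builds a triangulation $T(X)$ of the cubical manifold $X$, shows $| \! | X | \! | \cong | \! | T(X) | \! |_{\Delta}$ (Lemma \ref{le1}) and that the induced pullbacks $\Phi^*, \Psi^*$ are \emph{bigraded ring isomorphisms} between the cubical and simplicial form complexes (Lemma \ref{le2}), and then quotes \cite[Theorem 2.14]{Dup} verbatim: since $\mathcal{T}$ is multiplicative on the $E_{\infty}$-level by Dupont and $\Phi,\Psi$ are multiplicative by construction, the composite $\Phi^* \circ \mathcal{T} \circ \Psi^*$ is multiplicative, and that is the whole proof. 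Your plan instead proposes to carry out Dupont's argument directly on cubes. That can be done, but the step you defer --- ``follow \cite[Theorem 2.14]{Dup} line for line, with cubes in place of simplices'' --- is precisely where the content lies and it does not transfer mechanically: Dupont's homotopy between $\Psi(\varphi\wedge\psi)$ and $\Psi(\varphi)\cup\Psi(\psi)$ is built from the elementary Whitney forms on $\Delta^p$ and the simplicial Alexander--Whitney diagonal, and in the cubical setting you would have to construct their analogues (Serre-type front-face/back-face cup product, a cubical Eilenberg--Zilber homotopy, cubical Whitney forms) and verify naturality in the face maps $\delta^{\varepsilon}_i$ with the cubical sign conventions; none of this is supplied in your sketch. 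Your spectral-sequence ``alternative'' is also not sufficient as stated: since the fibre-integration map is not multiplicative at the cochain level, multiplicativity of $II(A)^{**}_r \cong II(\mathcal{A})^{**}_r$ on the $E_1$-page only controls the associated graded, and one still needs either a cochain-level homotopy-multiplicative comparison or the paper's reduction to the simplicial case. In short: your direct approach would yield a self-contained cubical de Rham multiplicativity theorem at the cost of rebuilding Dupont's homotopy machinery, whereas the paper's triangulation trick buys that machinery for free, the only new checks being the multiplicativity and face-compatibility of $\Phi$ and $\Psi$ (Appendix \ref{A1}). If you pursue your version, the missing construction of the natural cubical homotopy is the piece you must actually supply.
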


\section{Note on rational cohomology of the rack spaces }\label{a2das}
In this section, we will compute the rational cohomology of the rack space $BQ$. 
For this, we consider the invariant part, $A^n(Q)^G$, of $n$-forms,
where the action of $G$ on $Q$ is induced from the right actions in \eqref{gxgx}.
We have the inclusion $A^n(Q)^G \hookrightarrow A^n(Q)$.

\begin{prop}\label{cohomology}
Let $Q$ be a smooth quandle of the form $(G,H,z_{0})$. Assume that the inclusion $A^n(Q)^G \hookrightarrow A^n(Q)$ yields an isomorphism on cohomology.
Then, there are isomorphisms
$$ H^n(BQ; \mathbb{R}) \cong \bigoplus_{n=i+j } H^i(Q^{j}; \mathbb{R}) , \ \ \ \ H^n(B_G Q; \mathbb{R}) \cong \bigoplus_{n=i+j } H^i(G \times Q^{j}; \mathbb{R}) .$$
\end{prop}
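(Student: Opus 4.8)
The plan is to use the double complex $A^{k,\ell}(X)$ together with its spectral sequence, so the structure of the proof mirrors the classical computation of the cohomology of a simplicial manifold in \cite{Dup}. Specializing $X$ to the rack-space cubical manifold of Example \ref{rs} (first with $Y$ a point, then with $Y=G$), we obtain $\mathcal{A}^{k,\ell}(X)=\A^\ell(X_k)=\A^\ell(Q^k)$, and the corollary to Theorem \ref{thddm1} tells us that $H^*(BQ;\R)$ is computed by the spectral sequence $II(\mathcal{A})_r^{**}$, whose $E_1$-term is $H^\ell_{dR}(Q^k)$ with the differential $\delta=\sum_i(-1)^i(\delta_i^0-\delta_i^1)^*$ induced on de Rham cohomology.

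The key step is then to show that this $\delta$-complex is trivial in the sense that all its differentials vanish, so that the spectral sequence degenerates at $E_1$ and $H^n(BQ;\R)\cong\bigoplus_{i+j=n}H^i(Q^j;\R)$. To see this I would first invoke the hypothesis: since $A^n(Q)^G\hookrightarrow A^n(Q)$ is a cohomology isomorphism, I can replace the complex $\A^\ell(Q^k)$ by the $G$-invariant forms $\A^\ell(Q^k)^G$ (using the Künneth-compatible diagonal $G$-action on $Q^k$) without changing cohomology, and $\delta$ restricts to this subcomplex. Now on $G$-invariant forms the two face maps become comparable: $\delta^0_j$ just drops the $j$-th coordinate, while $\delta^1_j$ combines dropping the $j$-th coordinate with the simultaneous translations $x_i\mapsto x_i\lhd x_j=x_i\cdot(\bullet\lhd x_j)$ and the base shift $y\mapsto y\cdot\kappa(x_j)$; because all of these are given by the $G$-action \eqref{gxgx} and we are on $G$-invariant forms, the pullbacks $(\delta^0_j)^*$ and $(\delta^1_j)^*$ agree after integrating $x_j$ out — more precisely, $(\delta^0_j)^*\phi$ and $(\delta^1_j)^*\phi$ have the same class in de Rham cohomology since the relevant diffeomorphisms of $Q^{k}$ lie in $\mathrm{Inn}(Q)$ (or its closure), which is connected, hence act trivially on $H^*$. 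Thus each summand $\delta^0_j-\delta^1_j$ is zero on $E_1=H^*(Q^k)$, so $\delta=0$ on $E_1$ and the spectral sequence collapses.

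After collapse, I would identify the $E_\infty=E_1$ page: the $(k,\ell)$-entry is $H^\ell(Q^k;\R)$, and summing along the total degree $n=k+\ell$ gives exactly $\bigoplus_{i+j=n}H^i(Q^j;\R)$, with no extension problems over $\R$. The $B_GQ$ case is identical with $X_k=G\times Q^k$ in place of $Q^k$, using that the $G$-action there is still by elements of a connected group (diagonal $\mathrm{Inn}(Q)$-action on the $Q^k$ factors together with translation on the $G$ factor), so the same vanishing of $\delta$ on cohomology applies and yields $\bigoplus_{i+j=n}H^i(G\times Q^j;\R)$.

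The main obstacle I expect is the second step: justifying rigorously that $(\delta^0_j)^*=(\delta^1_j)^*$ on $H^*(Q^k;\R)$. The face map $\delta^1_j$ is not literally a coordinate projection but involves the quandle operation, so one must argue that it is homotopic (as a map $Q^{k-1}\times Q\to Q^{k-1}$ after reindexing, or appropriately between the relevant products) to the plain projection — or, on $G$-invariant forms, that pulling back along any $g\in\mathrm{Inn}(Q)$ acts as the identity on cohomology because $\mathrm{Inn}(Q)$ is a connected Lie group. One has to be careful that $\mathrm{Inn}(Q)$ is indeed connected (or pass to its identity component, noting that each generator $(\bullet\lhd y)$ is connected to $\id_Q$ through a path, e.g.\ by transitivity/homogeneity of $Q$), and that the integration/averaging over the $x_j$-variable is compatible with the de Rham differential. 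Handling this homotopy-invariance cleanly — possibly by reducing to the statement that $BQ$ fibers suitably or by a direct chain-homotopy on invariant forms — is where the real work lies; the rest is bookkeeping with the spectral sequence and Künneth.
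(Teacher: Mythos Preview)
Your overall strategy matches the paper's: compute $H^*(BQ;\R)$ via the spectral sequence $II(\mathcal{A})_r^{**}$ with $E_1^{k,\ell}=H^\ell(Q^k)$ and argue it collapses. The substantive gap in your proposal is that your connectedness/homotopy argument operates only on the $E_1$-page: showing that each $(\delta_j^0)^*$ and $(\delta_j^1)^*$ agree on de Rham cohomology gives $d_1=0$, hence $E_2=E_1$, but says nothing about $d_r$ for $r\geq 2$. You write ``the spectral sequence collapses'' immediately after establishing $d_1=0$, but that inference is not justified, and nothing in your homotopy sketch controls the higher differentials.

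The paper avoids this by working at the chain level rather than on $E_1$. It replaces $\A^\ell(Q^k)$ by the componentwise-invariant subcomplex $\A^\ell(Q^k)^{G^k}$ (quasi-isomorphic by the hypothesis together with K\"unneth) and asserts that on this subcomplex $(\delta_i^0-\delta_i^1)^*\psi=0$ holds as an equality of \emph{forms}, not merely of cohomology classes: since $\delta_i^1$ differs from $\delta_i^0$ by applying the element $(\bullet\lhd x_i)\in G$ to each of the first $i-1$ factors, and $\psi$ is $G$-invariant factor-by-factor, the two pullbacks coincide identically. With $\delta=0$ on the nose on a quasi-isomorphic sub-double-complex, all differentials vanish at once and $E_1=E_\infty$ follows. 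Your route via connectedness of $\mathrm{Inn}(Q)$ cannot substitute for this without either upgrading to a chain-level statement on a suitable subcomplex or separately proving $d_r=0$ for every $r\geq 2$, neither of which you supply. (You do flag the delicacy here --- the acting group element depends on the coordinate $x_i$ --- so when verifying the paper's chain-level claim you should check that the pullbacks agree including their $dx_i$-components, not just fibrewise over $x_i$.)
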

\begin{proof}
We consider the spectral sequence $II(\mathcal{A})_r^{**}$ in \S \ref{a2das23},
which strongly converges to $E_{\infty}^n \cong H^n (A^{* }(B_G Q)) \cong H^n(B_G Q;\R).$

We will study the $E_{1}^{p,q}$-term $H^p( \A^* (Q^q ) ) $ in detail.
We let $ \A^*(Q^q )^{G^q}$ be the set of $G^q$-invariant forms on $Q^q$,
where $G^q$ acts $Q^q$ componentwise.
By assumption, the inclusion $\A^*(Q^q )^{G^q} \hookrightarrow \A^*(Q^q )$ is a quasi-isomorphism for any $q$. 
For any $G^q$ invariant $p$-form $\psi \in \A^p(Q^q )^{G^q} $, we note
$(\delta_i^0 -\delta_i^1 )^* (\psi )=0$ by definition; therefore, $\delta^*(\psi) =( \sum_{i=1}^q (-1)^i (\delta_i^0 -\delta_i^1 ) )^* (\psi )=0$.
Thus, this spectral sequence collapses at $ E_{2}^{p,q} $, i.e., $E_{2}= E_{\infty }$. Hence, we can get the conclusion:
\[ H^n(B Q;\R) \cong H(A^{n}(B Q)) \cong E_{\infty}^n \cong \bigoplus_{n=i+j } E_{2}^{i,j} \cong \bigoplus_{n=i+j } H^i(Q^{j}; \mathbb{R}).\]
Next, we will show the second isomorphism in a similar way.
Consider the spectral sequence $II(\mathcal{A})_r^{**}$ in \S \ref{a2das23}, where $X_p= G\times X^p.$
Then, we can readily see that this spectral sequence $ E_{2}^{p,q} $ abuts to $ E_{\infty }^{p,q}$.
To conclude, we have the second claim as follows:
\[ H^n(B_G Q;\R) \cong H^n (A^{* }(B_G Q)) \cong E_{\infty}^n \cong \bigoplus_{n=i+j } E_{2}^{i,j} \cong \bigoplus_{n=i+j } H^i(G \times Q^{j}; \mathbb{R}).\]
\end{proof}
Although the assumption in this proposition seems strong, there are many examples.
\begin{exa}\label{rs2x}
If $Q$ is the $2m$-sphere, and $G$ is the orthogonal group $O(2m)$, then the generator of $H^{2m}(S^{2m}) \cong \R$ is represented by the $O(2m)$-invariant volume form. 
Thus, $A^*(Q)^G \hookrightarrow A^*(Q)$ is quasi-isomorphic. 

As another example, consider the unitary group $G=U(m)$ and the Grassmann manifold $\mathrm{Gr}(m,n)$ over $\C$, where $m,n \in \mathbb{N}$ with $n <m$. The cohomology is generated by the Chern classes. Chern-Weil theory implies that the Chern classes is invariant with respect to the action of $U(m)$. Hence, this situation satisfies the assumption.

In general, if $G$ is compact, the Cartan algebra of $G/H$ enables us to compute $H^n(G/H; \mathbb{R}) $ with generators from some information of $\bigwedge^* \mathfrak{g}$, where $\mathfrak{g} $ is the Lie algebra of $G$; see \cite{Ra} and references therein for the details. Thus, we can check whether $G/H$ satisfies the assumption or not.
\end{exa}
\begin{rem}
As seen in the proof, for $Q= (G,H,z_0)$, the inclusion $A^n(Q)^G \hookrightarrow A^n(Q)$ gives rise to a ring homomorphism $ H^*(BQ; \R)\ra \bigoplus_{n=i+j } H^i(Q^{j}; \mathbb{R}) $. However, in general, it seems far from an isomorphism.

For example, if $Q=S^{2n-1}$ and $G=O(2n-1)$, $Q$ does not satisfy the assumption.
Moreover, as a private communication, Ishikawa pointed out that the cohomology of $BQ$ is far from the result of Proposition \ref{cohomology}.
\end{rem}

We give an example of computing $H_*(BQ)$ where $Q$ is the $2m$-sphere:
\begin{exa}\label{rsx}
Let $Q$ be the $2m$-sphere, $S^{2m}$, as a symmetric space, i.e., a quandle of type 2.
Then, $H^k_{dR}(Q) \cong \R $ if and only if $k=0$ and $k=2m. $
Therefore, for $k ,j \geq 0$,
the dimension of $ H^{2mj } (Q^k )$ is equal to $ \tbinom{k}{j}$.
Hence, the Poincar\'{e} series $\sum_{k} \mathrm{dim} H^{k} (BQ;\R )s^k $ is
\begin{equation}\label{g999} \sum_{j=0}^{\infty} \sum_{k=0}^{\infty} s^{ 2mj } \tbinom{k}{j} s^k =\sum_{k=0}^{\infty} \sum_{j=0}^k s^{ 2mj +k } \tbinom{k}{j}= \sum_{k=0}^{\infty} (1+s^{2mk})s^k = \frac{1}{1-s -s^{2m+1}} \in \Z[\! [ s ] \! ]. \end{equation}
\end{exa}

\section{Rational homotopy group of the rack spaces}\label{r4das}
We will show Theorem \ref{prp:} of computing the rational homology of $BQ$,
\begin{thm}\label{rqhbgq}\label{prp:}
Let $Q$ be a smooth quandle of the form $(G,H,z_{0})$. Suppose that $G$ is connected and compact, and satisfies the same assumption in Proposition \ref{cohomology}.
Let $u_i= \dim \pi_i(B Q) \otimes \Q$. Then, the following equality holds:
\begin{equation}\label{g99} \sum_{k \geq 0 } \mathrm{dim}(H^k (B Q; \mathbb{R}) ) s^k= \prod_{i=0}^{\infty} \frac{(1+s^{2i+1})^{u_{2i+1}}}{ (1-s^{2i})^{u_{2i}}} \in \Z [\! [s ] \!] . \end{equation}
\end{thm}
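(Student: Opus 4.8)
The plan is to invoke the Milnor–Moore / Cartan–Serre machinery of rational homotopy theory, once we know $BQ$ is a nice enough space. The key observation is that Proposition \ref{cohomology} already determines $H^*(BQ;\R)$ additively; what we need is the dictionary between this graded vector space and the ranks $u_i = \dim(\pi_i(BQ)\otimes\Q)$. First I would verify that $BQ$ is a simply connected (or at least nilpotent) space of finite rational type: under the stated hypotheses ($G$ connected compact, $Q=(G,H,z_0)$ satisfying the assumption of Proposition \ref{cohomology}), each $Q^j$ has finite-dimensional cohomology, so by Proposition \ref{cohomology} each $H^k(BQ;\R)$ is finite-dimensional; simple-connectivity of $BQ$ follows from the fact that $\pi_1$ of the rack space of a connected quandle is abelian and in fact here one checks $H^1(BQ;\R)$ and the fundamental group are controlled by $H_*(Q)$ — when $Q$ is connected and $H^1(Q;\R)=0$ the space $BQ$ is simply connected (if $Q$ is merely nilpotent one passes to the universal cover or works with the nilpotent completion).

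Second, granting that $BQ$ is simply connected of finite rational type, the rational homotopy groups assemble into a graded Lie algebra $\pi_*(\Omega BQ)\otimes\Q$, and the rational Hurewicz/Milnor–Moore theorem identifies $H_*(\Omega BQ;\Q)$ with the universal enveloping algebra $U(\pi_*(\Omega BQ)\otimes\Q)$. By the Poincaré–Birkhoff–Witt theorem, as a graded vector space this enveloping algebra is the free graded-commutative algebra on $\pi_*(\Omega BQ)\otimes\Q$: a polynomial algebra on the even-degree generators and an exterior algebra on the odd-degree ones. Since $\pi_{i}(\Omega BQ)\otimes\Q \cong \pi_{i+1}(BQ)\otimes\Q$, a generator of $\pi_{2i+1}(BQ)$ contributes a degree-$2i$ even class to $H_*(\Omega BQ)$ (polynomial, factor $1/(1-s^{2i})$) and a generator of $\pi_{2i}(BQ)$ contributes a degree-$(2i-1)$ odd class (exterior, factor $(1+s^{2i-1})$). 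Re-indexing, the Poincaré series of $H_*(\Omega BQ;\Q)$ is exactly the right-hand side of \eqref{g99}. It then remains to connect this to $H^*(BQ;\R)$: here I would use the loop-space/fibration comparison, namely that the hypothesis "formal" (or the explicit splitting of Proposition \ref{cohomology}, which exhibits $H^*(BQ;\R)$ as a tensor/direct-sum built from $H^*(Q^j)$) forces the rational homotopy type of $BQ$ to be that of a product of Eilenberg–MacLane spaces after rationalization — equivalently, $BQ$ is \emph{formal} and \emph{coformal} in the relevant range, so the bigraded model degenerates and the Poincaré series of $H^*(BQ;\R)$ equals the Poincaré series of the free graded-commutative algebra on the rational homotopy, which is precisely the displayed product.

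More concretely, the cleanest route avoids loop spaces: use the minimal Sullivan model $(\Lambda V, d)$ of $BQ$, where $\dim V^i = u_i$. The generating function identity "Euler characteristic of the Koszul-type resolution" gives in general
\[
\sum_k \dim(H^k(BQ;\R))\,s^k \ \Big/ \ \prod_{i}\frac{(1+s^{2i+1})^{u_{2i+1}}}{(1-s^{2i})^{u_{2i}}} \ = \ 1
\]
precisely when the model is formal with $H^*(BQ;\R)$ a complete intersection / the differential is "as small as possible", and the hypotheses (via the explicit shape of $H^*(BQ;\R)$ in Proposition \ref{cohomology}, combined with formality of $Q=G/H$ for $G$ compact connected cited at the end of \S\ref{das}) guarantee this. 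So the concrete steps are: (i) check $BQ$ simply connected of finite type; (ii) invoke formality of $BQ$, deduced from formality of $Q$ and the multiplicative structure of Theorem \ref{thm2} together with the cohomology computation of Proposition \ref{cohomology}; (iii) apply the standard rational-homotopy Poincaré-series formula for a formal space with its minimal model, reading off the exponents $u_i$ as the dimensions of the space of generators; (iv) re-index to match \eqref{g99}.

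The main obstacle I anticipate is step (ii): proving that $BQ$ is formal, or at least that its minimal model has no "extra" differential beyond what the cohomology ring forces. Formality of $Q=G/H$ (a compact homogeneous space) is classical, but $BQ$ is a cubical bar-type construction over $Q$, and one must show formality is inherited — this should follow from Theorem \ref{thm2} (multiplicativity of the de Rham isomorphism) together with the direct-sum decomposition of Proposition \ref{cohomology}, which exhibits the cohomology ring of $BQ$ very explicitly; but making the formality quasi-isomorphism functorial through the cubical realization, and checking that the $E_2$-degeneration in Proposition \ref{cohomology}'s spectral sequence is compatible with the multiplicative structure, is the delicate point. Once formality is in hand, the rest is the standard Milnor–Moore bookkeeping.
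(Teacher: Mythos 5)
Your proposal has two genuine gaps, and the second one is fatal to the whole strategy. First, step (i) is false: $BQ$ is never simply connected for these quandles. The rack space has $H^1(BQ;\R)\cong\R$ (see the coefficient of $s$ in the series $1/(1-s-s^{2m+1})$ of Example \ref{rsx}), and the remark after the theorem notes that $\pi_*(BQ)$ contains $\pi_*(\Omega S^2)$ as a direct summand, so $\pi_1(BQ)\otimes\Q\neq 0$; indeed the formula \eqref{g99} itself has $u_1\geq 1$. Your fallback "$H^1(Q;\R)=0$ implies $BQ$ simply connected" is not correct, and nilpotence of $BQ$ is nowhere established, so neither Milnor--Moore for $\Omega BQ$ nor a minimal Sullivan model can be invoked for $BQ$ in the way you propose. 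Second, even granting niceness, your step (ii)--(iii) does not give the statement: formality does not imply that the Poincar\'e series of $H^*$ equals the free graded-commutative series on the rational homotopy (spheres are formal and fail this), and your intermediate claim that the Poincar\'e series of $H_*(\Omega BQ;\Q)$ equals the right-hand side of \eqref{g99} is off by a degree shift -- PBW for the loop space produces $\prod(1+s^{2i-1})^{u_{2i}}/(1-s^{2i})^{u_{2i+1}}$, the desuspended product, not the displayed one. What \eqref{g99} really encodes is that $H^*(BQ;\Q)$ has the Poincar\'e series of a free graded-commutative algebra on $\pi_*(BQ)\otimes\Q$, and some structural input beyond formality is needed to force this; your argument never supplies it.

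The missing idea is the one the paper is built on: work with $B_GQ$ rather than $BQ$. By Clauwens, $B_GQ$ is an associative topological monoid, hence weakly equivalent to a loop space $\Omega Z$ with $Z$ simply connected; Milnor--Moore plus Poincar\'e--Birkhoff--Witt then give the product formula \eqref{g09} for $B_GQ$ with exponents $r_i=\dim\pi_i(B_GQ)\otimes\Q$. One then uses the principal $G$-bundle $B_GQ\to BQ$: Proposition \ref{cohomology} shows $H_*(G;\R)\to H_*(B_GQ;\R)$ is injective, an argument on primitives upgrades this to injectivity of $\pi_*(G)\otimes\Q\to\pi_*(B_GQ)\otimes\Q$, the rational homotopy long exact sequence splits, and dividing \eqref{g09} by the corresponding (Hopf/Milnor--Moore) series for the compact connected group $G$ yields \eqref{g99}. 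None of these ingredients (the monoid structure, the bundle $B_GQ\to BQ$, the splitting of the homotopy sequence) appear in your proposal, and without them the key fact you are implicitly assuming -- that $H^*(BQ;\Q)$ behaves like the cohomology of an $H$-space -- remains unproved.
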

\begin{rem} The homotopy group $\pi_i(B Q)$ contains $\pi_* (\Omega S^2 )$ as a direct summand.
Indeed, letting $P$ be the quandle on the single point, any maps $Q \ra P$ and $P \ra Q$ are quandle homomorphisms, and $ B P \simeq \Omega S^2$ is shown \cite{FRS2,FRS3}.
\end{rem}

To prove the theorem, we review a monoid structure on $B_GQ$, following \cite{Cla}.
For any $n,m \in \N$, we take a map $\mu: ( I^n \times G \times Q^n) \times ( I^m \times G \times Q^m ) \ra I^{n+m} \times G \times Q^{n+m}$ defined by
\[\mu ([t_1, \dots, t_n, g,x_1 \dots, x_n], [ t'_1, \dots, t'_m ,h,x'_1 \dots, x'_m] )\]
\begin{equation} \ \ \ \ \ := [ t_1, \dots, t_n, t'_1, \dots, t'_m, gh, x_1h ,\dots x_n h, x'_1 \dots, x'_m ].
\notag \end{equation}
\noindent
Regarding $B_G Q$ as a quotient of
$ \bigsqcup_p (I^p \times G \times Q^p) $, this $\mu$ passes to a binary operation $B_G Q \times B_G Q \ra B_G Q$, which
makes $B_G Q$ into an associative topological monoid with unit \cite[\S 2.5]{Cla}.
Recall a well-known fact that there exists a simplicial set $Z$ such that $B_G Q$ is weak equivalent to a (based) loop space $\Omega Z$ as an $H$-space. 

Next, we will observe the equality \eqref{g09} below from Milnor-Moore theorem.
Here, since $Q$ and $G$ are compact, $B_GQ$ is a CW-complex of finite type; hence, so is $Z$ (see \cite{FHT} for more detail).
Since the space $B_G Q$ is connected by assumption, we notice $ \pi_0(Z) \cong 0$ and $ \pi_1(Z) \cong \pi_0(B_G Q) \cong 0$, that is,
the space $Z$ is simply connected.
Since the cohomology group $H^*(B_G Q; \mathbb{R})$ is made into a Hopf algebra, Milnor-Moore theorem (see \cite[\S 21]{FHT}) immediately implies the isomorphisms
$$ \mathrm{Prim}( H^*(B_G Q; \mathbb{Q})) \cong \mathrm{Prim}( H^*(\Omega Z ; \mathbb{Q})) \cong \pi_* ( \Omega Z ) \otimes \Q\cong \pi_* ( B_G Q ) \otimes \Q,$$
where $\mathrm{Prim} ( H^*(B_G Q; \mathbb{Q} ))$ means the subspace consisting of primitive elements of $H^*(B_G Q; \mathbb{Q})$.
Then, the Poincar$\acute{\mathrm{e}}$-Birkoff-Witt theorem (see \cite[\S 33(c)]{FHT}) directly leads to
\begin{equation}\label{g09} \sum_{k \geq 0 } \mathrm{dim}(H^k (B_G Q; \mathbb{R}) ) s^k= \prod_{i=0}^{\infty} \frac{(1+s^{2i+1})^{r_{2i+1}}}{ (1-s^{2i})^{r_{2i}}} \in \Z [\! [s ] \! ],
\end{equation}
where $r_i= \dim \pi_i(B_G Q) \otimes \Q$.

\begin{proof}[Proof of Theorem \ref{prp:}]
First, notice that the natural projection $B_G Q \ra BQ $ is
a principal (topological) $G$-bundle (see \cite[\S 3]{FRS1} or \cite[Proposition 6]{Cla}).
Let $\iota :G \ra B_G Q$ be the fiber inclusion.
Then, we have the long exact sequence of homotopy groups
$$ \cdots \ra \pi_n(G) \otimes \Q \stackrel{\iota_*}\lra \pi_n(B_G Q ) \otimes\Q\lra \pi_n(BQ)\otimes\Q \lra \pi_{n-1}(G) \otimes\Q\ra \cdots \ \ \ \ (\mathrm{exact}). $$
Notice that $B_G Q$ includes the Lie group $G$ as a topological submonoid by definitions,
and $\iota$ is a monoid homomorphism.
The induced map $\iota: H_*(G;\R ) \ra H_*(B_G Q;\R ) $ is injective by Proposition \ref{cohomology}.
An observation of the primitive elements implies the injectivity of $\iota_*: \pi_n(G) \otimes \Q \ra \pi_n(B_G Q ) \otimes\Q$.
Thus, \eqref{g09} is divisible by $\sum_k \mathrm{dim}(H^k (G; \mathbb{R}) ) s^k$.
Hence, dividing \eqref{g09} by the Milnor-Moore theorem on $G$, we have the conclusion \eqref{g99}.
\end{proof}


\begin{exa}\label{corex:}
If $Q$ is $S^{2m}$ and $G=SO(2m+1)$ as in Example \ref{rsx},
we can compute the rational homotopy from the Poincar\'{e} series \eqref{g999}.
We focus only on the cases of $m=1,2,3$, and give a list of $\mathrm{rank}\pi_{k}(BS^{2}) $ as follows.
\begin{center}
\begin{tabular}{|c|c|c|c|c|c|c|c|c|c|c|c|c|c|c|c|c|c|c|c|c|c|c|c|c|c|c|c|c|c|c|c|c|c|c|c|c|c|c|c|c|c|c|c|c|c|c|c|c|}
\hline
$k $& 1 & 2 & 3 &4 &5 & 6 & 7 & 8 & 9 & 10 & 11 & 12 & 13 & 14 & 15 & 16 & 17 \\ \hline
$\mathrm{rank}\pi_{k}(BS^{2}) $ & 1 & 1 & 1& 1& 1 & 2 & 2 & 2 & 3 & 5 & 6 & 7 &11 & 27 &47& 85& 151 \\ \hline
$\mathrm{rank}\pi_{k}(BS^{4}) $ & 1 & 1 & 0& 0& 1 & 1 & 1 & 1& 1 & 2 & 2& 2 &3& 7 &11 & 16& 23 \\ \hline
$\mathrm{rank}\pi_{k}(BS^{6}) $ & 1 & 1 & 0& 0& 0 & 0 & 1 & 1& 1 & 1 & 1& 1&1& 3 &5 & 7& 10 \\ \hline
\end{tabular}
\end{center}
\end{exa}

\section{Continuous $\R$-value rack cocycles.}\label{A33}
In Sections \ref{A33}--\ref{A326}, 
we focus on the rack space $BX^{\delta}$, where $X^{\delta}$ means a smooth quandle with descrete topology.
The cohomology of $BX^{\delta}$ coincides with the rack cohomology \cite{FRS1,FRS2,FRS3}, and has applications to low-dimensional topology; see, e.g., \cite{CJKLS,CKS,IK,Nos3}.

For this, let us briefly review rack cohomology \cite{FRS1,FRS2,FRS3}. 
Let $X$ be a quandle. 
Then, $C_n^R(X)$ is defined to be the free right $\Z$-module generated by $X^n$.
For $( x_1, \dots,x_n) \in X^n$, we define $\partial^R_n (x_1, \dots,x_n) $ by
$$ \sum_{1\leq i \leq n} (-1)^i\bigl( (x_1, \dots,x_{i-1},x_{i+1},\dots,x_n) -( x_1\lhd x_i,\dots,x_{i-1}\lhd x_i,x_{i+1},\dots,x_n) \bigr) \in C_{n-1}^R(X).$$
This yields a homomorphism 
$\partial^R_n : C_n^R(X) \rightarrow C_{n-1}^R(X)$ such that $ \partial^R_n \circ \partial^R_{n+1}=0. $
Dually, for an abelian group $A $, we have the cochain complex $C^n_R(X;A)$ defined by $\Hom (C_n^R(X), A) $
with the dual operation of $\partial^R_n $.
As seen in, e.g., \cite{CJKLS,CKS,IK}, for applications to low-dimensional topology, it is important to concretely describe an $n$-cocycle as a map $X^n \ra A$ with $n \leq 4 $.


In this section, we will restrict on the continuous subcochain group.
Let $Q$ be a smooth quandle of the form $(G,H,z_0)$.
That is, we consider the subcomplex of $C^n_R(Q;\R)$ defined by
$ C^n_{\rm cont}( Q ):= \{ f: Q^n \ra \R \ | \ f \mathrm{ \ is \ continuous} \}$,
which was first studied in \cite{ESZ}, and called {\it the continuous cohomology}.
Furthermore, we introduce a class of $Q$:
\begin{defn}[{cf. homogeneousness in \cite{LN}}]\label{r43}
Fix $m \in \Z$. The smooth quandle $Q$ is said to be {\it semi-homogenous} (of level $m$),
if for any $a \in Q$ there is a zero measure set $O_a$ such that
the $C^{\infty}$-map $ Q \setminus O_a \ra Q \setminus ( a \lhd O_a) $ which sends $x $ to $a \lhd x $ is a covering of degree $m$.
\end{defn}
\begin{exa}\label{r3}
For example, the quandle on the $m$-sphere $S^m$ is semi-homogenous of level 2.
Indeed, letting $q \in S^m $ be the antipodal point against $a$, and
$O_a$ be the equator between $a$ and $q $, we can easily show
the map $ Q \setminus O_a \ra Q \setminus \{ q \} $ is a covering of degree $2$.
In parallel, since the projective spaces $\R P^m , \mathbb{C}P^m$ are quotients of some spheres, we can easily see that $\R P^m $ and $ \mathbb{C}P^m$ are semi-homogenous.

More generally, we conjecture that, if $X$ is the smooth quandle from every compact symmetric space (explained in Example \ref{kkk3}), $X$ may be semi-homogenous.
In fact, T. Nagano \cite{Nagano} introduced the concept of ``centrosome", and he and M. S. Tanaka gave many examples of centrosome, which indicate zero-measure sets $ O_a$ satisfying Definition \ref{r43}.
\end{exa}

\begin{exa}\label{r4}We will consider the case where $Q$ is semi-homogenous and of finite order.
Then, $O_q$ must be the empty set; thus, the covering $ Q \ra Q $ which sends $x$ to $a \lhd x$ must be bijective. Namely $m=1$. This bijectivity was called homogenous property in \cite{LN}.
\end{exa}

We will show a theorem, as a continuous version of \cite[Theorem 1.1]{LN}, which assumes semi-homogeneousness.


\begin{thm}\label{co43e}
If a transitive smooth quandle $Q=G/H$ is semi-homogenous and compact,
every cocycle in $C^n_{\rm cont}( Q ) $ is cohomologous to a constant map.
In particular, the cohomology $H^n_{\rm cont}( Q ) $ is $\R.$
\end{thm}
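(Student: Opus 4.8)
The plan is to mimic the standard "averaging" argument for continuous group cohomology, adapted to the semi-homogenous quandle setting. The key tool is the normalized Haar measure $dg$ on the compact Lie group $G$, which descends to a $G$-invariant probability measure $\mu$ on $Q = G/H$. For a continuous cochain $f \in C^n_{\rm cont}(Q)$, I would define the "integration" operators $P_j : C^n_{\rm cont}(Q) \to C^{n-1}_{\rm cont}(Q)$ by integrating out the $j$-th variable against $\mu$, i.e. $(P_j f)(x_1,\dots,x_{n-1}) = \int_Q \tilde{f}(x_1,\dots,x_{j-1},y,x_j,\dots,x_{n-1})\, d\mu(y)$ for a suitable re-insertion; more precisely I want a homotopy operator $s : C^n_{\rm cont}(Q) \to C^{n-1}_{\rm cont}(Q)$ with $\partial_R s + s \partial_R = \mathrm{id} - (\text{constant projection})$. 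The natural candidate is obtained by integrating out the \emph{last} variable $x_n$, so that $(s f)(x_1,\dots,x_{n-1}) := \int_Q f(x_1,\dots,x_{n-1},y)\, d\mu(y)$, and then checking the chain-homotopy identity against the rack differential $\partial_R^n$ recalled in Section \ref{A33}.

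The heart of the computation is that the two families of face maps $\delta_i^0, \delta_i^1$ behave well under this integration. For $i < n$, deleting or $\lhd$-acting on the $i$-th coordinate commutes with integrating out $y$, up to the change of variables $y \mapsto y \lhd x_i$ (for the $\varepsilon=1$ face). This is exactly where \textbf{semi-homogeneousness} enters: the map $x \mapsto a \lhd x$ is a degree-$m$ covering off a zero-measure set, and because $G$ is compact and the operation is induced from \eqref{qkettei}, this covering is measure-preserving (it is the descent of left/right translation composed with conjugation by $z_0$, which preserves Haar measure). Hence $\int_Q \varphi(y \lhd x_i)\, d\mu(y) = \int_Q \varphi(y)\, d\mu(y)$ for any continuous $\varphi$, the factor $m$ being absorbed since we work with the \emph{normalized} measure and the covering has a single "sheet-average". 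I would isolate this invariance as a small lemma first, then the homotopy identity reduces to a bookkeeping check on the surviving $i=n$ term: integrating out $x_n$ in $\delta_n^0$ gives the identity on the remaining cochain, while in $\delta_n^1$ the term $x_j \lhd x_n$ again gets averaged away to a constant. Iterating $s$ (or applying it once and using transitivity plus connectedness of $G$ to see that the resulting "constant in the last variable" cochain is cohomologous to a genuine constant) yields that every cocycle is cohomologous to a constant map.

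Finally, to conclude $H^n_{\rm cont}(Q) \cong \R$, I must check that a nonzero constant cochain $c$ is not a coboundary: evaluating the would-be primitive on a degenerate-type chain, or simply computing $\delta^R c$ on $(x_1,\dots,x_{n+1})$, one sees the rack differential of any continuous cochain pairs to zero against a suitable cycle detecting the constant (for $n=1$ this is immediate since $\partial^R_1 = 0$ forces $C^1$ cocycles, and $\partial^R_2 f (x,y) = f(x) - f(x\lhd y)$ has image of the constants only if the constant is zero — more care is needed here because $(\bullet \lhd y)$ being measure-preserving does not kill constants). The cleanest route is to exhibit an explicit $n$-cycle $z_n \in C_n^R(Q)$ on which every constant cochain evaluates nontrivially while every coboundary vanishes; a diagonal-type cycle built from a fixed point $x_0$ and using transitivity should work. \textbf{The main obstacle} I anticipate is precisely this measure-preservation claim for the covering $x \mapsto a \lhd x$: Definition \ref{r43} only asserts it is a covering of degree $m$ off a null set, so I must argue separately (using the $(G,H,z_0)$ presentation and compactness, hence the two-sided invariance of $dg$) that this covering is in fact measure-multiplying by the constant $m$, which is what makes the normalized averaging operator well-defined and the homotopy identity close up.
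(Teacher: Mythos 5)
The central step of your plan does not work as stated. Carry out the homotopy identity for your operator $s$, $(sf)(x_1,\dots,x_{n-1})=\int_Q f(x_1,\dots,x_{n-1},y)\,d\mu(y)$: for $i\leq n$ the faces $\partial_i^0,\partial_i^1$ do not touch the last slot, so those terms cancel \emph{identically} between $s\,\partial_R$ and $\partial_R\, s$ (no change of variables $y\mapsto y\lhd x_i$ is involved), and the only surviving contribution is the last face, giving
$s\,\partial_R f-\partial_R\, sf=(-1)^{n+1}\bigl(f-\Phi f\bigr)$ with $\Phi f(x_1,\dots,x_n)=\int_Q f(x_1\lhd y,\dots,x_n\lhd y)\,d\mu(y)$,
the \emph{diagonal} average over a single acting variable. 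You claim this term ``gets averaged away to a constant'', but that is false: semi-homogeneity (Definition \ref{r43}) only controls the one-variable map $y\mapsto x\lhd y$, whereas for $n\geq 2$ the image of $y\mapsto(x_1\lhd y,\dots,x_n\lhd y)$ is a measure-zero subset of $Q^n$ and the integral remembers the mutual configuration of the $x_i$. Concretely, on the sphere quandle each $\bullet\lhd y=s_y$ is an isometry, so for $f(x_1,x_2)=d(x_1,x_2)$ one has $\Phi f=f$. Iterating $s$ only replaces $\Phi$ by diagonal averages over longer words of inner maps, which at best converge to the $G$-invariant diagonal average (the distance function is already such), never to a constant. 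So your argument shows a cocycle is cohomologous to its diagonal $\mathrm{Inn}(Q)$-average, which is far from the statement of the theorem.

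What the paper does instead, following Litherland--Nelson, is to average the coordinates one at a time with \emph{independent} integration variables: the operators $\phi_n^j(h)=\int_{Q^j}h(x_1\lhd y_1,\dots,x_j\lhd y_j,x_{j+1},\dots,x_n)\,dy_1\cdots dy_j$, linked by the explicit homotopies $D_n^j$ (Proposition \ref{le14}), whose verification requires the invariance relations of Lemmas \ref{le9} and \ref{le11}. Only after reaching $\phi_n^n(f)$, where each coordinate carries its own variable $y_j$, does semi-homogeneity (via Lemma \ref{le9} applied coordinatewise) decouple the integrals and force constancy. This ladder of operators is not an optional refinement of your single $s$; it is the mechanism that makes the reduction to a constant possible, and it is the missing idea in your proposal. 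Two smaller remarks: your concern about converting ``covering of degree $m$ off a null set'' into an integral identity against the invariant measure is a legitimate subtlety, but your proposed justification (that $x\mapsto a\lhd x$ descends from measure-preserving translations/conjugation on $G$) is not correct as stated, since in the presentation \eqref{qkettei} this map is not a translation in the varying argument; and the final point that a nonzero constant is not a coboundary is handled simply by evaluating on a diagonal tuple $(x_0,\dots,x_0)$, where every rack coboundary vanishes by (Q1).
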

In conclusion, in order to obtain non-trivial rack cocycles of $Q$,
we should assume neither compactness of $G$ nor the continuous $\R$-value cochain.
For example, the quandle on $Q=\R^2$ with $x \lhd y = 2y-x$ has a non-trivial
continuous 2-cocycle $X^2 \ra \R$: see Corolally \ref{ddcor}. 
On the other hand, if
$Q=\R/\Z=S^1$ is a quandle with $x \lhd y = 2y-x$, then Proposition \ref{dd} implies that the universal 2-cocycle from $C_2^R(Q;\Z) $ is not continuous.

To prove Theorem \ref{co43e}, we need several lemmas. Using the Haar measure of $G$, we can choose a metric $dy$ on $Q$ which is invariant respect to the action of $G$. We may assume $\int_Q dy = 1$.
\begin{lem}[{cf. Lemmas 3.1 and 3.2 in \cite{LN}}]\label{le9} For any $x, w \in Q$ and any continuous function $K : Q \ra \R$, the following equalities hold.
$$\int_QK( x \lhd y) dy = \int_Q K( ( x \lhd w) \lhd y) dy= \int_Q K(( x \lhd y) \lhd w ) dy. $$
\end{lem}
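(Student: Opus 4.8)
The plan is to exploit the $G$-invariance of the measure $dy$ together with the quandle axiom (Q3). For the first equality, recall that $Q = G/H$ is transitive, so for a fixed $w \in Q$ the map $y \mapsto y \lhd w$ is an element of $\mathrm{Inn}(Q) \subset \mathrm{Diff}(Q)$, hence preserves $dy$ by construction of the measure (it is obtained by quotienting the Haar measure, and $\mathrm{Inn}(Q)$ acts through $G$). Therefore, writing $y' = y \lhd w$, a change of variables gives $\int_Q K((x \lhd w) \lhd y') \, dy' = \int_Q K((x\lhd w)\lhd(y\lhd w))\, dy$. Now apply (Q3) to rewrite $(x \lhd w)\lhd(y \lhd w) = (x \lhd y)\lhd w$; actually the cleanest route is to use the form of (Q3) that says $(\bullet \lhd w)$ is a quandle endomorphism, i.e.\ $(x \lhd w)\lhd (y \lhd w) = (x \lhd y)\lhd w$. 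This already identifies the middle integral with $\int_Q K((x \lhd y)\lhd w)\, dy$, giving the second equality directly.

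For the remaining equality $\int_Q K(x \lhd y)\, dy = \int_Q K((x\lhd y)\lhd w)\, dy$, I would again change variables: the composite map $z \mapsto x \lhd z$ followed by $\bullet \lhd w$ is, by the computation above, the same as $z \mapsto (x \lhd w) \lhd (z \lhd w)$, but more to the point, $\bullet \lhd w : Q \to Q$ is a measure-preserving diffeomorphism, so $\int_Q K((x \lhd y)\lhd w)\, dy = \int_Q K(u)\, d(u\lhd w^{-1})$-type reasoning — better yet, set $F(t) := K(t \lhd w)$, which is continuous, and then $\int_Q F(x \lhd y)\, dy$ is what we want to compare with $\int_Q K(x \lhd y)\, dy$. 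The key observation is that both sides equal $\int_Q (\text{something}) $ after pushing forward by $\bullet \lhd w$: since $\bullet \lhd w$ preserves $dy$, we have $\int_Q K(x\lhd y)\, dy = \int_Q K((x \lhd y))\, dy$ trivially, and applying the measure-preserving map $\bullet\lhd w$ inside $K$ costs nothing because we can absorb it by the substitution $y \mapsto$ (the unique preimage under $\bullet \lhd w$), using axiom (Q2) to guarantee that preimage exists and is unique, hence that $\bullet \lhd w$ is a bijection.

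Concretely, the cleanest unified argument: the map $\Phi_w := (\bullet \lhd w): Q \to Q$ is a measure-preserving diffeomorphism, and by (Q3) it satisfies $\Phi_w(a \lhd b) = \Phi_w(a) \lhd \Phi_w(b)$. Hence for any continuous $K$ and any fixed $x$,
\[
\int_Q K\big((x \lhd y)\lhd w\big)\, dy = \int_Q K\big(\Phi_w(x \lhd y)\big)\, dy = \int_Q K\big(\Phi_w(x) \lhd \Phi_w(y)\big)\, dy = \int_Q K\big((x\lhd w)\lhd y'\big)\, dy',
\]
where in the last step I substituted $y' = \Phi_w(y)$, valid since $\Phi_w$ is a measure-preserving bijection. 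This is the second equality. For the first equality, I substitute $y' = \Phi_w^{-1}(y)$ in $\int_Q K((x\lhd w)\lhd y)\, dy$ to get $\int_Q K((x \lhd w)\lhd \Phi_w(y'))\, dy' = \int_Q K(\Phi_w(x \lhd y'))\, dy' $, and then transitivity lets me replace $x \lhd w$ by a general starting point — but in fact I do not even need transitivity here: I need $\int_Q K(\Phi_w(x \lhd y'))\, dy' = \int_Q K(x \lhd y')\, dy'$, which follows because $\Phi_w$ preserves the measure and $K \circ \Phi_w$ is just another continuous function, so both integrals are of the form "integrate a continuous function composed with $x \lhd \bullet$ against $dy$" — wait, this needs that $x \lhd \bullet$ pushes $dy$ to a measure independent of the integrand manipulation; the honest statement is simply $\int_Q (K\circ \Phi_w)(x \lhd y)\, dy$ versus $\int_Q K(x \lhd y)\, dy$, and these need not be equal for a fixed continuous $K$ unless $x \lhd \bullet$ pushes $dy$ forward to something $\Phi_w$-invariant. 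So the first equality genuinely requires transitivity and the fact (from Lemma's companion, or Example~\ref{kkk}) that $Q \cong \mathrm{Stab}(x_0)\backslash \mathrm{Inn}(Q)$ with $\mathrm{Inn}(Q)$ acting measure-preservingly: one shows $(x \lhd w)\lhd \bullet$ and $x \lhd \bullet$ induce the same pushforward of $dy$ because $\mathrm{Inn}(Q)$ acts transitively and isometrically.

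\textbf{Main obstacle.} The genuinely delicate point is the first equality, $\int_Q K(x\lhd y)\,dy = \int_Q K((x\lhd w)\lhd y)\,dy$: this is asserting that the pushforward measure $(x \lhd \bullet)_* dy$ on $Q$ does not change when we replace $x$ by $x \lhd w$. For type $2$ (symmetric space) quandles this follows since $x \lhd \bullet = s_\bullet(x)$ and one can argue via the transitive isometric action; in general one must invoke that $\mathrm{Inn}(Q)$ acts transitively by measure-preserving diffeomorphisms and that $x$ and $x \lhd w$ lie in the same orbit, so there is $g \in \mathrm{Inn}(Q)$ with $g \cdot x = x \lhd w$, whence $(x\lhd w)\lhd y = (g\cdot x)\lhd y$ and one needs a commutation relation between the left $\mathrm{Inn}$-action and the right $\lhd$-operation — this is exactly the content of the conjugation formula \eqref{qkettei}. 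I expect roughly half a page of bookkeeping with that formula to nail it down; everything else is a one-line change of variables plus (Q3).
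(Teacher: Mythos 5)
Your treatment of the second equality is correct and is exactly the paper's argument: since $\bullet \lhd w$ is induced by the right $G$-action (via $\kappa(w)$) it preserves $dy$, so the substitution $y \mapsto y \lhd^{-1} w$ together with self-distributivity turns $\int_Q K((x\lhd y)\lhd w)\,dy$ into $\int_Q K((x\lhd w)\lhd y)\,dy$. The gap is in the first equality, and it is a gap of approach, not of bookkeeping. The route you sketch --- pick $g \in \mathrm{Inn}(Q)$ with $x\cdot g = x\lhd w$ and push $g$ through the operation using the conjugation formula \eqref{qkettei} --- cannot close: equivariance $(x\cdot g)\lhd(y\cdot g) = (x\lhd y)\cdot g$ plus invariance of $dy$ only gives $\int_Q K((x\lhd w)\lhd y)\,dy = \int_Q (K\circ g)(x\lhd y)\,dy$, i.e.\ the same kind of integral with the \emph{different} integrand $K\circ g$. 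Equating this with $\int_Q K(x\lhd y)\,dy$ is equivalent to the pushforward measure $(x\lhd\bullet)_*\,dy$ being $g$-invariant, and that is precisely the point at issue (for free it is only invariant under $\mathrm{Stab}(x)$, whose action on $Q$ is far from transitive). So no amount of manipulation of \eqref{qkettei} will produce the first equality from invariance and transitivity alone.

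What you never invoke is the hypothesis that the lemma is really about: semi-homogeneity (Definition \ref{r43}), which is a standing assumption of Theorem \ref{co43e}. The paper's proof of the first equality uses it directly: away from the null set $O_x$, the map $y\mapsto x\lhd y$ is an $m$-fold covering, so the change of variables $y' = x\lhd y$ converts $\int_Q K(x\lhd y)\,dy$ into $m\int_Q K(y')\,dy'$, a quantity manifestly independent of $x$; applying this once with $x$ and once with $x\lhd w$ gives the first equality immediately, and the same identity is reused verbatim in the proof of Theorem \ref{co43e}. (Implicit here is that the covering interacts with $dy$ so that the change of variables produces a constant factor --- i.e.\ that the pushforward of $dy$ under $y\mapsto x\lhd y$ is a fixed multiple of $dy$; this is the compatibility your ``pushforward'' discussion was circling around, and it is supplied by the semi-homogeneity hypothesis, not by transitivity or by the conjugation formula.) In short: second equality, same as the paper; first equality, your proposed mechanism fails and the missing idea is the covering/measure argument coming from Definition \ref{r43}.
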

\begin{proof} We begin by computing the first term as 
\[ \int_Q K(x \lhd y) dy = \int_{Q \setminus x \lhd O_x \ } K( x \lhd y) dy = m \int_{Q \setminus O_x } K( y' ) dy' = m \int_Q K(y')dy'.\]
By replacing $x $ by $x \lhd w$, we similarly have $\int_Q K( ( x \lhd w) \lhd y) dy = m \int_Q K(y')dy'$, which implies the first equality.
By the right invariance of $dy$, replacing $y$ to $y \lhd^{-1} w$ implies
$$ \int_Q K(( x \lhd y) \lhd w ) dy = \int_Q K(( x \lhd (y \lhd^{-1} w)) \lhd w ) dy = \int_Q K( ( x \lhd w) \lhd y) dy. $$
This is the second equality, exactly.
\end{proof}
Next, we will prepare some maps.
We introduce two maps $ \partial_n^0 $ and $ \ \partial_n^1 $ from $C^n_{\rm cont}( Q ) $ to $C^{n+1}_{\rm cont}( Q ) $ by setting
\[\partial_i^0 (h) (x_1, \dots, x_{n+1})= h (x_1, \dots, x_{i-1}, x_{i+1 } , \dots ,x_{n+1}) ,\]
\[\partial_i^1 (h) (x_1, \dots, x_{n+1})= h (x_1 \lhd x_i , \dots, x_{i-1} \lhd x_i, x_{i+1 } , \dots , x_{n+1}) .\]
By definition, we should notice $\partial_n^R (h) =\sum_{i=1}^{n+1}( -1)^i (\partial_i^0 (h) -\partial_i^1 (h)) $.
In addition, for $j \leq n $, we define $\phi_n^j: C^n_{\rm cont}( Q ) \ra C^n_{\rm cont}( Q ) $ by
$$ \phi_n^j:(h) (x_1,\dots, x_{n}):= \int_{Q^j} h(x_1 \lhd y_1, \dots, x_j \lhd y_j, x_{j+1}, \dots, x_{n} ) dy_1 \cdots dy_j, $$
$\phi_n^{0} $ by the identity map, and $\phi_n^{n+1} $ by $\phi_n^n$.
Furthermore, we define $D_n^j: C^{n}_{\rm cont}( Q ) \ra C^{n-1}_{\rm cont}( Q ) $ by
$$ D_n^j(k) (x_1,\dots, x_{n-1}):= \int_{Q^j} k(x_1 \lhd y_1, \dots, x_{j-1} \lhd y_{j-1}, x_j , y_{j}, x_{j+1}\dots, x_{n -1 } ) dy_1 \cdots dy_j, $$
and $D_n^{n} $ by the zero map. Here, we should compare \cite{LN}; Precisely,
if $Q$ is of finite order, the maps $\phi_n^j $ and $ D_n^j$ coincide with the maps defined in
\cite[\S 3]{LN}. In addition, we give lemmas as relation among the above maps:
\begin{lem}[{cf. Lemmas 3.3--3.8 in \cite{LN}}]\label{le11} The following equalities hold.
\[ \partial_i^0 \circ D_{n}^j(h) =\partial_i^1 \circ D_{n}^j (h) \ \ \ \ \ \ \ \ \ \ \mathrm{for } \ \ 1 \leq i\leq j \leq n, \]
\[ D_{n+1}^j\circ \partial_i^0(h) = D_{n+1}^j\circ \partial_i^1(h) \ \ \ \ \ \mathrm{for } \ \ 1 \leq i\leq j \leq n ,\]
\[ \partial_{j+1}^0 \circ D_n^j(h) =\phi_n^{j-1} (h) \ \ \ \ \ \ \ \ \ \ \ \ \mathrm{for } \ \ 1 \leq j <n ,\]
\[ \partial_{j+1}^1 \circ D_n^j(h) =\phi_n^{j} (h) \ \ \ \ \ \ \ \ \ \ \ \ \ \ \mathrm{for } \ \ 1 \leq j < n, \]
\[D_{n+1}^j\circ \partial_i^0(h) = \partial_{i+1}^0 \circ D_n^j(h) \ \ \ \ \ \ \mathrm{for } \ \ 1 \leq j< i \leq n +1,\]
\[D_{n+1}^j\circ \partial_i^1(h) = \partial_{i+1}^1 \circ D_n^j(h) \ \ \ \ \ \ \mathrm{for } \ \ 1 \leq j< i \leq n +1.\]
\end{lem}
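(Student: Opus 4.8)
The plan is to prove all six identities by a single routine: for each, expand both sides by unfolding the definitions of the face operators $\partial_i^{\varepsilon}$ and of the integral operators $D_n^j$ and $\phi_n^j$, so that each side is rewritten as one integral over some power $Q^j$ of $h$ evaluated at an explicit list of arguments. Since $Q$ is compact and all integrands are continuous, Fubini's theorem lets me treat $\int_{Q^j}(\cdot)\,dy_1\cdots dy_j$ as an iterated integral and single out the innermost integration in whichever variable $y_k$ is convenient. Comparing the two argument lists, the sides will differ only by superfluous $\lhd$-decorations on the integration variables, and Lemma~\ref{le9}, applied to the appropriate innermost integrals, reconciles them. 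I would organize the verification along the three pairs in which the statement is displayed.

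\emph{First pair} ($1\le i\le j\le n$). After expansion, the argument lists of $\partial_i^0\circ D_n^j(h)$ and $\partial_i^1\circ D_n^j(h)$ agree except that on the $\partial_i^1$ side the $k$-th coordinate, for $1\le k\le i-1$, reads $(x_k\lhd x_i)\lhd y_k$ instead of $x_k\lhd y_k$; since $x_i$ does not involve $y_k$, the first equality of Lemma~\ref{le9} applied to the $y_k$-integral (taken innermost) converts the former into the latter, and iterating over $k=1,\dots,i-1$ gives the identity. The proof of $D_{n+1}^j\circ\partial_i^0(h)=D_{n+1}^j\circ\partial_i^1(h)$ is identical, the only change being that the offending coordinate is now $(x_k\lhd y_k)\lhd(x_i\lhd y_i)$, with $x_i\lhd y_i$ constant in $y_k$, so one invokes the third equality of Lemma~\ref{le9}.

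\emph{Second pair.} For $\partial_{j+1}^{\varepsilon}\circ D_n^j(h)$ with $1\le j<n$, applying $\partial_{j+1}^{\varepsilon}$ to $D_n^j(h)$ recombines the artificially inserted integration variable $y_j$ with its neighbouring slot; once the surviving $\lhd$-factors are normalized by Lemma~\ref{le9}, the remaining integral is recognized, term by term, as the defining integral of $\phi_n^{j-1}$ when $\varepsilon=0$ and of $\phi_n^{j}$ when $\varepsilon=1$. \emph{Third pair.} For $D_{n+1}^j\circ\partial_i^{\varepsilon}(h)=\partial_{i+1}^{\varepsilon}\circ D_n^j(h)$ with $j<i$, no integral is manipulated at all: the coordinate deleted by the face map interacts only with coordinates that $D_n^j$ copies unchanged, so deletion and $D_n^j$ commute once the indices of the remaining coordinates are shifted appropriately.

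I expect the only genuine difficulty to be organizational rather than conceptual. One has to keep exact track of how the index of each coordinate moves as face maps delete entries and as $D_n^j$ inserts the variable $y_j$, and one has to treat separately the boundary cases $i=1$, $i=j$ and $j=n-1$, where the conventions $D_n^n=0$, $\phi_n^0=\id$ and $\phi_n^{n+1}=\phi_n^n$ enter. Beyond this bookkeeping there is no analysis: the only inputs are Fubini's theorem and the three identities of Lemma~\ref{le9}, all available because $Q$ is compact and every function in sight is continuous.
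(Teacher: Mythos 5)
Your proposal takes essentially the same route as the paper: its proof likewise unfolds the definitions of $\partial_i^{\varepsilon}$, $D_n^j$ and $\phi_n^j$ into explicit integrals over $Q^j$, compares the two argument lists slot by slot, and removes the superfluous $\lhd$-decorations by applying Lemma~\ref{le9} (with Fubini) $i-1$ times, exactly the mechanism you describe. The only difference is one of scope, not method: the paper writes this computation out only for the first equality and refers to Lemmas 3.3--3.8 of \cite{LN} for the remaining five, whereas you sketch the same bookkeeping for all six identities.
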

\begin{proof}
The proofs are almost same as those of Lemmas 3.3--3.8 in \cite{LN}, respectively.
Thus, we only show the first equality. We now denote $a \lhd b$ by $a^b$ for simplicity.
For $i<j$, we can easily show that
$\partial_i^0 \circ D_{n}^j(h) (x_1,\dots, x_n )$ is equal to
$$ \int_{Q^j } h( x_1^{ y_1}, \dots , x_{i-1}^{ y_{i-1}}, x_{i+1}^{y_{i+1}}, \dots, x_{j-1}^{y_{j-1}}, x_j, y_j, x_{j+1}, \dots, x_n )dy_1 \cdots dy_j , $$
and, that $\partial_i^1 \circ D_{n}^j(h)(x_1,\dots, x_n )$ is equal to
$$ \int_{Q^j } h( x_1^{{y_1}^{x_i\lhd y_i }} , \dots , x_{i-1}^{{y_{i-1}}^{x_i\lhd y_i }}, x_{i+1}^{y_{i+1}}, \dots, x_{j-1}^{y_{j-1}}, x_j, y_j, x_{j+1}, \dots, x_n )dy_1 \cdots dy_j . $$
In addition, if $i=j$, we similarly have
\[\partial_j^0 \circ D_{n}^j(h) = \int_{Q^{j} } h( x_1^{ y_1}, \dots , x_{j-1}^{ y_{j-1}}, y_j, x_{j+1} , \dots, x_n )dy_1 \cdots dy_{j} ,\]
\[\partial_j^0 \circ D_{n}^j(h) = \int_{Q^{j} } h( x_1^{ y_1 \lhd x_j }, \dots , x_{j-1}^{ y_{j-1}\lhd x_j }, y_j, x_{j+1} , \dots, x_n )dy_1 \cdots dy_{j}. \]
Applying Lemma \ref{le9} $i-1$ times and Fubini theorem to the integrals,
we obtain the equality $ \partial_i^0 \circ D_{n}^j(h) =\partial_i^1 \circ D_{n}^j (h) $ as required.
\end{proof}
Putting all this together, we have
\begin{prop}[{cf. Proposition 3.1 in \cite{LN}}]\label{le14} For $j>1$, $D_n^j: C^*_{\rm cont}( Q ) \ra C^{*+1}_{\rm cont}( Q ) $ is a chain homotopy from $\phi^j_n$ to $\phi^{j-1}_n.$
\end{prop}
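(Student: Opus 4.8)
The plan is to establish the homotopy formula
\[ \partial^R_{n-1}\circ D_n^j \;+\; D_{n+1}^j\circ \partial^R_n \;=\; \pm\bigl( \phi_n^{j-1} - \phi_n^j \bigr) \qquad \text{on } C^n_{\rm cont}(Q), \]
which is precisely the assertion that $D^j=\{D_n^j\}_n$ is a chain homotopy from $\phi^j$ to $\phi^{j-1}$; the overall sign that appears — one finds $(-1)^{j+1}$ — does not affect the conclusion. The only inputs are the six commutation identities collected in Lemma \ref{le11} (whose proof in turn uses the translation-invariance of the integral in Lemma \ref{le9}), the expansion $\partial^R_m=\sum_{i=1}^{m+1}(-1)^i(\partial_i^0-\partial_i^1)$ recorded just before Lemma \ref{le11}, and the bookkeeping conventions $\phi_n^0=\id$, $\phi_n^{n+1}=\phi_n^n$, $D_n^n=0$. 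The analytic point that these integral operators really land in $C^*_{\rm cont}(Q)$ — continuity of the integrands on the compact manifold $Q$ — was already settled when the operators were introduced, and is not reopened.

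I would begin by substituting the two $\partial^R$-expansions into the left-hand side, turning it into a finite alternating sum of composites $\partial_i^{\varepsilon}\circ D_n^j$ and $D_{n+1}^j\circ\partial_i^{\varepsilon}$, and then sort the summands according to the size of $i$ relative to $j$. For $1\le i\le j$, the first two identities of Lemma \ref{le11}, namely $\partial_i^0 D_n^j=\partial_i^1 D_n^j$ and $D_{n+1}^j\partial_i^0=D_{n+1}^j\partial_i^1$, make every such summand a difference $(\partial_i^0-\partial_i^1)(\cdots)$, hence zero. The surviving contribution is the summand $i=j+1$ of $\partial^R_{n-1}\circ D_n^j$: the third and fourth identities of Lemma \ref{le11} evaluate $\partial_{j+1}^0 D_n^j$ and $\partial_{j+1}^1 D_n^j$ as $\phi_n^{j-1}$ and $\phi_n^j$, so this summand equals $\pm(\phi_n^{j-1}-\phi_n^j)$, which is the desired right-hand side. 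It then remains to see that all the other summands — those with $i\ge j+2$ in $\partial^R_{n-1}\circ D_n^j$, and those with $i\ge j+1$ in $D_{n+1}^j\circ\partial^R_n$ — cancel. Here the last two identities of Lemma \ref{le11}, $D_{n+1}^j\partial_i^{\varepsilon}=\partial_{i+1}^{\varepsilon}D_n^j$ for $i>j$, supply an index shift $i\mapsto i+1$ under which these two families pair off term by term with opposite signs, and the few leftover terms at the top of the range vanish because the coface operator that would occur has index exceeding the range admissible on $D_n^j(h)\in C^{n-1}_{\rm cont}(Q)$.

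Since Lemma \ref{le11} already supplies the six commutation identities, nothing new has to be computed here; what remains is pure sign-and-range bookkeeping, and that is the only genuine point of care, hence the main obstacle. The hypothesis $j>1$ is used precisely at this stage: it keeps $\phi_n^{j-1}$ among the genuine averaging operators (rather than the special value $\phi_n^0=\id$) and places the distinguished index $j+1$ inside the summation ranges, so that identities (3) and (4) of Lemma \ref{le11} contribute the term $\phi_n^{j-1}-\phi_n^j$; the remaining extreme cases are absorbed by the stated conventions. The whole argument runs in parallel with the proof of \cite[Proposition 3.1]{LN}, and I would organise it to follow that template step by step.
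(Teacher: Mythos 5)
Your overall strategy is exactly the one the paper intends (its own proof is a one-line deferral to the computation of Proposition 3.1 in \cite{LN}, using Lemma \ref{le11}): expand $\partial^R$ into the $\partial_i^{\varepsilon}$'s, kill the range $i\le j$ by the first two identities, extract $\pm(\phi_n^{j-1}-\phi_n^j)$ from the distinguished index $i=j+1$, and cancel everything else by the commutation identities. But the one step you yourself flag as the point of care --- the endpoint bookkeeping --- is where your argument breaks. After applying $D_{n+1}^j\circ\partial_i^{\varepsilon}=\partial_{i+1}^{\varepsilon}\circ D_n^j$ you are left with the $i=n$ and $i=n+1$ terms of $D_{n+1}^j\circ\partial^R_n$, and you discard them on the grounds that the cofaces $\partial_{n+1}^{\varepsilon},\partial_{n+2}^{\varepsilon}$ are out of range on $C^{n-1}_{\rm cont}(Q)$. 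That is not a valid argument: $D_{n+1}^j\partial_n^{\varepsilon}(h)$ and $D_{n+1}^j\partial_{n+1}^{\varepsilon}(h)$ are perfectly well-defined cochains and are not zero in general; the fact that the quoted identity would rewrite them as an undefined coface only shows that the identity cannot be used at the top of the range in the form you are using it.

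What actually happens (and this is worth checking directly from the definitions, since items (3)--(6) of Lemma \ref{le11} as printed have the compositions/index shifts reversed) is: for $i\ge j+2$ one has $D_{n+1}^j\circ\partial_i^{\varepsilon}=\partial_{i-1}^{\varepsilon}\circ D_n^j$ --- the shift goes down, not up --- while at $i=j+1$ one has $D_{n+1}^j\circ\partial_{j+1}^0=\phi_n^{j-1}$ and $D_{n+1}^j\circ\partial_{j+1}^1=\phi_n^j$ (the variable $y_j$ becomes a dummy of total mass one, resp. is absorbed via Lemma \ref{le9}); by contrast $\partial_{j+1}^{\varepsilon}\circ D_n^j$ is not equal to $\phi_n^{j-1}$ or $\phi_n^j$. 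With these corrected identities the computation closes with no leftover terms: the surviving summands of $\partial^R_{n-1}\circ D_n^j$ (indices $j+1\le i\le n$) cancel exactly against the reindexed summands of $D_{n+1}^j\circ\partial^R_n$ (indices $j+2\le i\le n+2$, shifted to $j+1\le i\le n$), and the $i=j+1$ summand of $D_{n+1}^j\circ\partial^R_n$ alone produces the right-hand side, giving $\partial^R_{n-1}D_n^j+D_{n+1}^j\partial^R_n=(-1)^{j+1}(\phi_n^{j-1}-\phi_n^j)$. So the proposition is true and your plan is the right one, but as written your proof takes the distinguished term from the wrong composite and papers over two genuinely non-vanishing boundary terms; the fix is to verify the commutation identities from the definitions rather than taking the printed directions in Lemma \ref{le11} at face value. (Incidentally, $D_n^j$ lowers cochain degree, $C^n\to C^{n-1}$, despite the ``$C^{*+1}$'' in the statement of the proposition.)
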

\begin{proof}
The computation in the proof is same as that of Proposition 3.1 in \cite{LN}, by using Lemma \ref{le11}. Thus, we may omit writing the detailed computation.
\end{proof}
\begin{proof}[Proof of Theorem \ref{co43e}]
This proposition implies that every cocycle in $C^n_{\rm cont}( Q ) $ is cohomologous to the map $\phi^n_n (h) $.
By the proof of Lemma \ref{le9}, we notice
$$ \int_{Q^n} h (x_1 \lhd y_1, \dots, x_n \lhd y_n) dy_1 \cdots dy_n= m^n\int_{Q^n} h (y'_1, \dots, y_n') dy'_1 \cdots dy'_n . $$
Namely, this $\phi^n_n (h) $ does not depend on $x_1, \dots, x_n$, that is, a constant map.
To summarize, every cocycle in $C^n_{\rm cont}( Q ) $ is cohomologous to a constant map, as required.
\end{proof}
\section{Rack cocycles from secondary characteristic classes.}\label{A326}
In order to get non-trivial rack cocycles of quandles,
we will introduce an algorithm to obtain $\C/\Z$-value rack cocycles from
the secondary characteristic classes.

Our approach in this section is based on the works of Dupont and Kamber \cite{Dup,Dup2,DK}.
Thus, \S \ref{A36} reviews the works, and \S \ref{A3432} describes the algorithm.

\subsection{Review of Dupont \cite{Dup,Dup2}; presentations of group cocycles}\label{A36}

First, we prepare some homogenous complexes.
Given a set $X$ acted on by a group $G$,
let $C_n^{\Delta}(X) $ be the free $\Z$-module generated by $
(n+1)$-tuples of $X$, that is, $C_n^{\Delta}(X) = \Z\langle X^{n+1 } \rangle $. This $C_n^{\Delta}(X) $ has a differential operator $ \partial_*^{\Delta}$ defined by
$$ \partial_n^{\Delta}(x_0,\dots, x_n) = \sum_{i: \ 0 \leq i \leq n} (-1)^i (x_0,\dots, x_{i-1},x_{i+1}, \dots, x_n). $$
The action of $G$ on $X$ gives rise to the diagonal action on $C_n^{\Delta}(X) $. Denote by $ C_n^{\Delta}(X)_{G} $ the coinvariant $ C_n^{\Delta}(X)\otimes_{\Z[G]} \Z $.
For example, if $X=G$ with natural action of $G$, the complex $ C_*^{\Delta}(G) $ gives a $\Z[G]$-free resolution of the augmentation $\Z[G] \ra \Z$.
Therefore, the homology of $C_*^{\Delta}(G)_{G} $ is isomorphic to the ordinary group homology of $G$.

Next, we will explain Proposition \ref{co4e5} below.
Let $V$ be a manifold which is $(q-1)$-connected for some $q \in \Z$,
and $G$ be a Lie group with transitive action on $V$.
Let $C_{*}^{\rm sing}(V)$ be the chain complex of smooth singular simplicies in $V$.
This chain complex is naturally made into a right $\Z[G]$-module, and is acyclic of length $q-1$.
Then, we can find a chain transformation $\sigma $ of $G$-modules, which ensures the following commutative diagram: 
\begin{equation}\label{datte}
{\normalsize
\xymatrix{ \Z
\ar@{=}[d] & & C_0^{\Delta}(G) \ar[ll]_{\partial_0^{\Delta}} \ar[d]^{\sigma }& &C_1^{\Delta}(G )\ar[ll]_{\partial_1^{\Delta}} \ar[d]^{\sigma } & \ar[l]_{\ \ \ \ \partial_2^{\Delta}} \cdots & \ar[l]_{\!\!\! \partial_q^{\Delta}} \ar[d]^{\sigma }C_q^{\Delta}(G) \\
\Z & & C_{0}^{\rm sing} (V) \ar[ll]_{\partial_0} & & C_{1}^{\rm sing} (V)\ar[ll]_{\partial_1} & \ar[l]_{\ \ \ \ \partial_2} \cdots & \ar[l]_{\!\!\! \partial_q} C_{q}^{\rm sing} (V) .
}}
\end{equation}
As is known as the comparison theorem, this $\sigma$ is unique up to homotopy.
Furthermore, for a $G$-invariant complex value $q$-form $\omega$, we define a cochain $ \mathcal{C}(\omega) \in \Hom (C_q^{\Delta }(G)_G), \C )$ by
\begin{equation}\label{k12}\mathcal{C}(\omega) (g_0, g_1, \dots, g_q ):= \int_{\sigma ( g_0,g_1,\dots, g_q)} \omega, \end{equation}
for $g_0,g_1, \dots, g_q \in G.$ The following is due to Stokes theorem.
\begin{prop}[{\cite[Proposition 10.4]{Dup2}}]\label{co4e5}
Suppose that $\omega$ is closed, and that the integral $\int_z \omega$ lies in $\Z$ for any $z \in C_q^{\rm sing}(V;\Z)$.

Then, the cochain $ \mathcal{C}(\omega) $ is a $q$-cocycle mod $\Z$. Furthermore, it is nullcohomologous if $\omega = d \omega'$ for some $G$-invariant $(q-1)$-form $\omega'.$
\end{prop}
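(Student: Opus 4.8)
The plan is to establish three facts in turn: that the formula \eqref{k12} descends to the coinvariant module $C_q^{\Delta}(G)_G$, that the coboundary of the resulting cochain takes values in $\Z$, and that this cochain is a coboundary modulo $\Z$ as soon as $\omega=d\omega'$ with $\omega'$ a $G$-invariant $(q-1)$-form. The first is immediate: \eqref{k12} a priori defines an element of $\Hom(C_q^{\Delta}(G),\C)$, and since $\sigma$ is a chain map of $G$-modules and $\omega$ is $G$-invariant we have, for $h\in G$,
\[ \mathcal{C}(\omega)(hg_0,\dots,hg_q)=\int_{h\cdot\sigma(g_0,\dots,g_q)}\omega=\int_{\sigma(g_0,\dots,g_q)}h^{*}\omega=\mathcal{C}(\omega)(g_0,\dots,g_q), \]
so $\mathcal{C}(\omega)$ is $G$-invariant and factors through $C_q^{\Delta}(G)_G$.

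For the cocycle property, let $\delta$ denote the coboundary dual to $\partial^{\Delta}$ and write $\mathbf{g}=(g_0,\dots,g_{q+1})$. Unwinding definitions gives $(\delta\mathcal{C}(\omega))(\mathbf{g})=\int_{\sigma(\partial^{\Delta}_{q+1}\mathbf{g})}\omega$. This is where I expect the main obstacle to lie: one would like to write $\sigma(\partial^{\Delta}_{q+1}\mathbf{g})=\partial\,\sigma(\mathbf{g})$ and conclude that the integral equals $\int_{\sigma(\mathbf{g})}d\omega=0$ by Stokes and closedness, but the diagram \eqref{datte} only pins $\sigma$ down as a chain map in degrees $\le q$ — since $V$ is merely $(q-1)$-connected, the augmented complex $C_*^{\mathrm{sing}}(V)\to\Z$ is exact only through degree $q-1$ — and $\sigma$ in degree $q+1$ is unavailable; indeed over $\C$ the expression $(\delta\mathcal{C}(\omega))(\mathbf{g})$ genuinely need not vanish, as it detects the periods of $\omega$ against $H_q(V)$, which is precisely why the statement is only about cocycles mod $\Z$. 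The remedy is to observe that $\sigma(\partial^{\Delta}_{q+1}\mathbf{g})$ is nonetheless an integral $q$-cycle in $V$: it lies in $C_q^{\mathrm{sing}}(V;\Z)$ because $\sigma$ is $\Z$-linear, and $\partial\,\sigma(\partial^{\Delta}_{q+1}\mathbf{g})=\sigma(\partial^{\Delta}_q\partial^{\Delta}_{q+1}\mathbf{g})=0$ using the commutativity available in degrees $\le q$ together with $(\partial^{\Delta})^{2}=0$. By the integrality hypothesis on the periods of $\omega$, the number $(\delta\mathcal{C}(\omega))(\mathbf{g})$ therefore lies in $\Z$, i.e., $\mathcal{C}(\omega)$ is a $q$-cocycle mod $\Z$.

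Finally, suppose $\omega=d\omega'$ with $\omega'$ a $G$-invariant $(q-1)$-form. Exactly as in the first step, $\mathcal{C}(\omega')\in\Hom(C_{q-1}^{\Delta}(G)_G,\C)$ is well defined; moreover all degrees occurring in its coboundary are $\le q$, so $\sigma$ is a genuine chain map there and Stokes' theorem applies without obstruction:
\[ (\delta\mathcal{C}(\omega'))(g_0,\dots,g_q)=\int_{\sigma(\partial^{\Delta}_q(g_0,\dots,g_q))}\omega'=\int_{\partial\sigma(g_0,\dots,g_q)}\omega'=\int_{\sigma(g_0,\dots,g_q)}d\omega'=\mathcal{C}(\omega)(g_0,\dots,g_q). \]
Hence $\mathcal{C}(\omega)=\delta\mathcal{C}(\omega')$ already with $\C$-coefficients, and its reduction mod $\Z$ is therefore a coboundary, so $\mathcal{C}(\omega)$ is nullcohomologous. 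The single genuinely delicate point, as flagged above, is that $\sigma$ exists only in degrees $\le q$, which forces the cocycle step to be proved via ``the $\omega$-period of an integral cycle is integral'' rather than by a direct Stokes argument; the remaining ingredients — the sign conventions in $\partial^{\Delta}$, the definition of the coinvariant complex, and Stokes' theorem for smooth singular chains — are routine.
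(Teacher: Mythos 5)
Your proposal is correct and follows the same route the paper intends, which it delegates to \cite[Proposition 10.4]{Dup2} with the remark that it follows from Stokes' theorem: invariance of $\omega$ gives descent to coinvariants, the coboundary is the period of $\omega$ over the integral cycle $\sigma(\partial^{\Delta}\mathbf{g})$ and hence lies in $\Z$, and when $\omega=d\omega'$ Stokes' theorem exhibits $\mathcal{C}(\omega)=\delta\mathcal{C}(\omega')$. Your explicit handling of the fact that $\sigma$ is only defined in degrees $\leq q$ (forcing the cocycle step to go through integrality of periods rather than a naive Stokes computation) is exactly the point the citation glosses over, and it is handled correctly.
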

As an insightful result, Dupont-Kamber \cite{DK} showed that this formulation includes Chern-Simons classes as follows:
\begin{exa}[\cite{DK}.]\label{core2}
Let $G$ be $GL_k(\C)$, and $V$ be $GL_k(\C)/ GL_{k-1}( \C) $. By Bott periodicity, $V$ is $(2k-2)$-connected, and has $ H^{2k-1}(V;\Z) \cong \Z$. Since $V$ is the complexification of the compact homogeneous space $U(k)/U(k-1)$, the generator of the $(2k-1)$-th cohomology group can be represented by a complex value $G$-invariant $(2k-1)$-form $\omega_k$.
Then, the group cocycle $\mathcal{C}(\omega_k ) \in H^{2k-1 } ( GL_n(\C); \C/\Z)$ is shown to be equal to the $k$-th Chern-Simons class.
\end{exa}
\subsection{Relation to secondary characteristic classes}\label{A3432}
Under the condition in the previous subsection, we will show that
every secondary characteristic classes in the sense of \cite{Dup2,DK} produces an $n$-cocycle in the rack complex.

For this, we review Inoue-Kabaya map \cite{IK}.
Let $Q$ be a smooth quandle of the form $(G,H,z_0)$.
For $n \in \Z_{n\geq 2} , $ consider the following set composed of maps:
\begin{equation}\label{jiuy222}
I_n := \bigl\{ \ \iota : \{ 2, 3, \dots, n\} \lra \{ 0,1\} \ \bigr\}.
\end{equation}
For a tuple $(x_0,\dots, x_n) \in Q^{n+1 }$ and for each $\iota \in I_n$, we define
$x(\iota, i) \in Q$ by 
$$ x(\iota, i):= ( \cdots ( ( x_i \lhd^{\iota(i+1)} x_{i+1}) \lhd^{\iota(i+2)} x_{i+2}) \cdots ) \lhd ^{\iota(n)} x_n .$$
Here $x \lhd^0 y =y$.
Choose $p \in Q $. If $n\geq 2$, we define a homomorphism
\[ \varphi_n: C_n^R(Q ;\Z ) \lra C_n^{\Delta}(Q)_{G}, \]
by setting
\[ \varphi_n(x_1,\dots,x_n) := \sum_{\iota \in I_n} (-1)^{\iota(2) +\iota (3) + \cdots +\iota(n)}\bigl( p,x(\iota,1),\dots, x(\iota, n)\bigr).\]
If $n=1$, we define $\varphi_1(a)=(p,a).$
This $\varphi_n$ is shown to be a chain map. Namely, $ \partial_n^{\Delta}\circ \varphi_n =\varphi_{n-1} \circ \partial_{n}^{R}$.

Next, we review a $(G,H)$-projectivity of the complex $C_n^{\Delta}(Q) $ from \cite[\S 3]{NM}.
To this aim, an exact sequence $ N \stackrel{i}{ \ra }M \stackrel{j}{ \ra } L $ of right $\Z[G]$-module homomorphisms is {\it $(G,H)$-exact}, if the kernel of $j$ is a direct $\Z[H]$-module
summand of $M$.
A right $\Z[G]$-module $A$ is said to be {\it $(G,H)$-projective} if, for every $(G,H)$-exact sequence $0 \ra N \stackrel{i}{ \ra} M \stackrel{j}{ \ra } L\ra 0 $, and every $\Z[G]$-homomorphism $\psi : A \ra L$, there is a $\Z[G]$-homomorphism $\psi' : A \ra M$ such that $q \circ \psi ' = \psi $. Then, it is shown \cite[Proposition 3.10]{NM} that
the above module $C_{n}^{\Delta}(Q) $ is $(G,H)$-projective, and
the following sequence is $(G,H)$-exact:
$$ \cdots \stackrel{ \partial_{n+1}^{\Delta}}{\lra} C_{n}^{\Delta}(Q) \stackrel{ \partial_n^{\Delta}}{\lra} \cdots \ra C_{1}^{\Delta}(Q) \stackrel{ \partial_{1}^{\Delta}}{\lra} C_{0}^{\Delta}(Q) \lra \Z \lra 0. $$

Moreover, we can easily verify that the bottom sequence in \eqref{datte} is also $(G,H)$-exact.
Thus, by $(G,H )$-projectivity (see \cite[Proposition 3.11]{NM}),
the chain map $\sigma$ factors through a chain $\Z[G]$-map $ \tau: C_n^{\Delta}(Q) \ra C_{n}^{\rm sing} (Q)$ for $n \leq q$.
Here, the choice of $\tau$ is unique up to homotopy.
Hence, similarly, for any $G$-invariant $q$-form $\omega$ such that $\int_z \omega$ lies in $\Z$ for any $z \in C_q^{\rm sing}(Q;\Z)$, it can be easily shown that the following map is a $q$-cocycle modulo $\Z$.
\begin{equation}\label{k15}\mathcal{T}(\omega):Q^{q+1} \lra \C/\Z ; \ \ \ \ \ (x_0, x_1,\dots, x_q )\longmapsto \int_{\tau ( x_0, x_1,\dots, x_q)} \omega . \end{equation}
On the other hand, since $ C_q^{\Delta}(Q) $ is a $\Z[G]$-module,
the above chain map $\tau $ in \eqref{datte} factors through $C_q^{\Delta}(Q) $.
In conclusion, we have
\begin{prop}\label{d4e5}
Let $ \omega $ be the $q$-cocycle satisfying the assumption in Proposition \ref{co4e5}.
Then, the pullback $\varphi_{q}^{*} (\mathcal{T}(\omega) ) \in C_R^q(Q; \C/\Z) $ is a rack $q$-cocycle. \end{prop}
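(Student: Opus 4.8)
The plan is to assemble the claim from two facts already established: that $\mathcal{T}(\omega)$ is a $q$-cocycle modulo $\Z$ (proved just above via Stokes's theorem, using that $\tau$ is a chain $\Z[G]$-map and that $\int_z\omega\in\Z$ on integral singular cycles), and that $\varphi_q$ is a chain map, i.e.\ $\partial_q^{\Delta}\circ\varphi_q=\varphi_{q-1}\circ\partial_q^{R}$. Granting these, pullback of a cocycle along a chain map is a cocycle, so the result is essentially formal; the work lies in checking that $\mathcal{T}(\omega)$ is well-defined as a functional on the \emph{coinvariants} $C_q^{\Delta}(Q)_G$ so that the composition $\varphi_q^*(\mathcal{T}(\omega))$ makes sense, and in verifying the cocycle identity lands in $\C/\Z$ rather than $\C$.

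First I would spell out the domains. By construction $\varphi_q$ lands in $C_q^{\Delta}(Q)_G$, and $\mathcal{T}(\omega)$, defined by $(x_0,\dots,x_q)\mapsto \int_{\tau(x_0,\dots,x_q)}\omega$, descends to $C_q^{\Delta}(Q)_G$ precisely because $\omega$ is $G$-invariant and $\tau$ is $\Z[G]$-equivariant: for $g\in G$, $\int_{\tau(x_0 g,\dots,x_q g)}\omega=\int_{\tau(x_0,\dots,x_q)\cdot g}\omega=\int_{\tau(x_0,\dots,x_q)}g^*\omega=\int_{\tau(x_0,\dots,x_q)}\omega$. Hence $\mathcal{T}(\omega)\in\Hom(C_q^{\Delta}(Q)_G,\C/\Z)$ is a genuine cochain, and (as recorded above) $\delta\mathcal{T}(\omega)=0$ in $\C/\Z$: indeed $\delta\mathcal{T}(\omega)(x_0,\dots,x_{q+1})=\sum_i(-1)^i\int_{\tau(\dots\widehat{x_i}\dots)}\omega=\int_{\tau(\partial^{\Delta}(x_0,\dots,x_{q+1}))}\omega=\int_{\partial\,\tau(x_0,\dots,x_{q+1})}\omega=\int_{\tau(x_0,\dots,x_{q+1})}d\omega=0$ since $\omega$ is closed; the congruence is mod $\Z$ because reducing the boundary $\tau(x_0,\dots,x_{q+1})$ to an integral cycle by subtracting degenerate/integral pieces changes the integral only by an element of $\Z$ by the integrality hypothesis.

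Then I would simply compute, for $c\in C_{q+1}^R(Q;\Z)$,
\[
\bigl(\delta\,\varphi_q^*\mathcal{T}(\omega)\bigr)(c)=\varphi_q^*\mathcal{T}(\omega)\bigl(\partial_{q+1}^{R}c\bigr)=\mathcal{T}(\omega)\bigl(\varphi_q(\partial_{q+1}^{R}c)\bigr)=\mathcal{T}(\omega)\bigl(\partial_{q+1}^{\Delta}\varphi_{q+1}(c)\bigr)=\bigl(\delta\mathcal{T}(\omega)\bigr)\bigl(\varphi_{q+1}(c)\bigr)=0
\]
in $\C/\Z$, using the chain-map property of $\varphi$ in the third equality and the mod-$\Z$ cocycle property of $\mathcal{T}(\omega)$ in the last. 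This shows $\varphi_q^*(\mathcal{T}(\omega))\in C_R^q(Q;\C/\Z)$ is a rack $q$-cocycle.

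The only genuinely delicate point—and the step I expect to be the main obstacle—is the mod-$\Z$ bookkeeping: $\tau(x_0,\dots,x_{q+1})$ is a smooth singular chain, not a cycle, and $\partial\tau(x_0,\dots,x_{q+1})=\tau(\partial^{\Delta}(x_0,\dots,x_{q+1}))$ holds on the nose only because $\tau$ is a chain map over $C_*^{\Delta}(Q)$, which in turn rests on the $(G,H)$-projectivity and $(G,H)$-exactness established via \cite[Propositions 3.10, 3.11]{NM}; one must be careful that the factorization of $\sigma$ through $\tau:C_*^{\Delta}(Q)\to C_*^{\mathrm{sing}}(Q)$ is compatible with taking coinvariants, and that the uniqueness up to homotopy of $\tau$ makes the resulting cohomology class independent of the choice of $\tau$ and of the base point $p\in Q$ used in defining $\varphi_n$. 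These are exactly the verifications indicated by the phrase ``it can be easily shown'' preceding \eqref{k15}; I would make them explicit here, after which the proof closes as above.
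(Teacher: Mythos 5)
Your top-level plan is the same as the paper's (implicit) argument: $\varphi_q$ is a chain map into $C_q^{\Delta}(Q)_G$, the cochain $\mathcal{T}(\omega)$ descends to the coinvariants because $\omega$ is $G$-invariant and $\tau$ is a $\Z[G]$-map, and pulling back a mod-$\Z$ cocycle along a chain map gives a mod-$\Z$ cocycle; your displayed computation of $\delta\bigl(\varphi_q^*\mathcal{T}(\omega)\bigr)$ is fine.

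However, the justification you offer for the point you yourself single out as delicate --- that $\mathcal{T}(\omega)$ is a $q$-cocycle modulo $\Z$ --- contains a step that fails. The chain $\int_{\tau(\partial^{\Delta}(x_0,\dots,x_{q+1}))}\omega=\int_{\partial\,\tau(x_0,\dots,x_{q+1})}\omega=\int_{\tau(x_0,\dots,x_{q+1})}d\omega=0$ presupposes that $\tau$ is defined, and is a chain map, in degree $q+1$. It is not: $V$ is only $(q-1)$-connected, so the comparison argument in \eqref{datte} and its $(G,H)$-projective refinement produce $\sigma$ and $\tau$ only for $n\leq q$, and in the motivating examples (e.g.\ Example \ref{core2}, where $H_q(V)\cong\Z$) no chain-map extension to degree $q+1$ exists. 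Indeed, if your computation were legitimate it would show $\delta\mathcal{T}(\omega)=0$ exactly in $\C$, so no mod-$\Z$ reduction would ever be needed --- a sign that the mod-$\Z$ discrepancy cannot be explained by ``subtracting degenerate/integral pieces'' as you suggest. The correct mechanism, exactly as in Proposition \ref{co4e5}, uses $\tau$ only in degrees $\leq q$: the chain $\partial_{q+1}^{\Delta}(x_0,\dots,x_{q+1})$ is a cycle in $C_q^{\Delta}(Q)$, hence $\tau\bigl(\partial_{q+1}^{\Delta}(x_0,\dots,x_{q+1})\bigr)$ is an integral singular $q$-cycle, and the integrality hypothesis on the periods of $\omega$ forces its integral to lie in $\Z$, i.e.\ $\delta\mathcal{T}(\omega)\equiv 0 \pmod{\Z}$; closedness of $\omega$ and Stokes' theorem are needed only for independence of the choice of $\tau$ up to coboundary (and for the statement about exact forms). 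Since the paper records the mod-$\Z$ cocycle property just before \eqref{k15}, your formal pullback argument does close once this step is repaired, but as written it is the step that would fail.
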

As mentioned in Example \ref{core2}, the class of cocycles presented by $ \mathcal{T}(\omega) $
contains a class of generalized Chern-Simons classes.
In summary, such generalized classes can be represented as rack cocycles. Hence, it is reasonable to hope that this proposition produces many rack cocycles of $X$, when $X$ is a subquandle of $V$.
\begin{exa}\label{opp}
In the paper of Inoue-Kabaya \cite{IK}, they consider the case
$(PSL_2(\C), H,z_0)$, where $H$ is the unipotent subgroup ${\small \Bigl\{ \left(
\begin{array}{cc}
1 & b \\
0 & 1
\end{array}
\right) \Bigr|\ \ b \in \C \Bigr\}} $ and $z_0 = {\small \left(
\begin{array}{cc}
1 & 1 \\
0 & 1
\end{array}
\right) }.$
We remark that $ G/H$ is bijective to $(\mathbb{C} \times \mathbb{C} \setminus \{ (0,0)\})/ \pm $.
In this case, Chern-Simons 3-class $\widehat{C}_3$ is well-understood (see, e.g., \cite[Charters 7--11]{Dup} or \cite[\S 7]{IK}), and is represented by a map $\widehat{C}_3: V^4 \ra \C/4 \pi^2 \Z$ with a cocycle expression. Furthermore, $\widehat{C}_3$ has a close relation to complex volume of hyperbolic 3-manifolds. For this reason, the paper \cite{IK} presented $\widehat{C}_3$ as a rack 3-cocycle, and gave a result on the complex volume.
\end{exa}
\appendix
\section{Proofs of Theorems \ref{thddm1} and \ref{thm2}.}\label{A1}

We will prove Theorems \ref{thddm1} and \ref{thm2}.
The outline of the proofs are based on \cite{Dup,Cla}: precisely, Dupont \cite{Dup} showed a
de Rham theory of simplicial manifolds, and Clauwens \cite{Cla} constructed a triangulation of $\square$-sets, which induced a ring isomorphism on cohomology;
Thus, we give a bridge between their results, and give the proof of the theorems. 

For this purpose, we first prepare notation on simplicial manifolds from \cite{Dup}. {\it A simplicial manifold} $Y$ is defined as a sequence of manifolds $Y_n$ for $n \in \N$ together with face maps $\delta_i : Y_n \ra Y_{n-1}$ for $0 \leq i \leq n$ such that
$$\delta_{j-1} \delta_i =\delta_i\delta_j \mathrm{ \ \ \ for \ any \ \ }0 \leq i< j \leq n.$$
Let $\Delta^p \subset \R^{p+1}$ be the standard simplex
$$\Delta^p := \{ t=(t_0, \dots, t_p) \in \R^{p+1} \ | \ t_i \geq 0, \ \sum_{0 \leq i \leq p} t_i =1 \} ,$$
and let $\epsilon^i: \Delta^{p-1} \ra \Delta^p $ be the $i$-th face map.
Then, the fat realization $| \! | Y| \! |_{\Delta} $ of $Y$ is the quotient space of
$\sqcup_{p \geq 0} \Delta^p \times Y$, with the identifications
$$(\epsilon^i (t) ,y) \sim (t, \delta_i y), \ \ \ \ \ \ t \in \Delta^{p-1}, s \in Y_p, \ i=0,1,\dots, p .$$
Then, we denote $\A^*_{\Delta}( Y_p)$
by the DGA consisting of $n$-forms on $\Delta^p \times Y_p $ which are extended to $C^{\infty}$ forms on $ (\sum_i t_i =1 )\times Y_p$.
Moreover,
an {\it $n$-form} $\varphi$ on $Y$ is
a sequence of $n$-forms $\phi^{(p)} \in \A_{\Delta} ^n( Y_p)$ of $C^{\infty}$-class satisfying $ (\epsilon^{i} \times \id)^* \phi^{(p)}= (\id \times \delta_{i} )^* \phi^{(p-1)} $ for any $i\in \{1, \dots, p\}$. Then, we can define the de Rham cohomology of $\A_{\Delta} ^*( Y_*)$.
Furthermore, we decompose $ A_{\Delta} ^* (Y)$ into a sum $A_{\Delta} ^n (Y)= \bigoplus_{n=k+\ell} \A_{\Delta}^{k,\ell}(Y)$,
where $ A^{k,\ell}_{\Delta}(Y)$ is composed of the forms $\varphi$ of type $(k,\ell )$,
i.e., $\varphi$ restricted to $ \Delta^p \times Y_p$ is $ q_1^* (\phi^{(k)}_I ) \times q_2^* (\phi^{(\ell)}_Y ) $
for some $ \phi^{(k)}_I \in \A_{\Delta}^k (I^p)$ and $ \phi^{(\ell)}_Y \in \A_{\Delta}^{\ell} (Y_p)$. Here $q_{1} : \Delta^p \times Y_p \ra \Delta^p $ and
$q_{2} : \Delta^p \times Y_p \ra Y_p$ are the projections.
Also let $d_{\Delta}$ (resp. $d_Y$) denote the pullback of exterior differential on $\A_{\Delta}^* (\Delta^p)$ (resp. on $\A_{\Delta}^* (Y_p) $).
Thus, we have a double complex $(A^{k,\ell}_{\Delta}(Y), d_{\Delta}, d_Y )$,
and the total complex $(A^*_{\Delta}(Y), d)$, where $d= d_{\Delta}+ d_Y $.
Further, we consider another double complex $(\mathcal{A}_{\Delta}^{k,\ell}(Y), \delta, d_Y)$ where $\delta= \sum_{i=1}^p (-1)^i \delta_i .$




Following \cite[\S 3.2]{Cla}, we give a triangulation from a $\square$-set.
For $n \in \N$, let $[n]$ denote the set $\{1,2,\dots, n\}$.
A $k$-partition of $[n]$ is a sequence $S = (S_1; S_2;\cdots ; S_k)$ of nonempty
subsets of $[n]$ which are mutually disjoint and satisfy $[n]= S_1 \cup \cdots \cup S_k.$

Given a cubical manifold $X$, we define a simplicial manifold $T (X)$, as a manifold analogy of \cite[\S 3]{Cla}.
The set of $k$-simplicies $T (X)_k$ consists of the pairs $(x; S)$, where $x \in X_n$ and $S$ is a $k$-partition of $[n].$
The boundary maps are given by
\begin{equation}\label{k1}\ \delta_0(x; S_1;\cdots ; S_k) = (\delta_{S_1}^1 x; \theta_{S_1} (S_2); \cdots ; \theta_{S_1}(S_k)), \end{equation}
\begin{equation}\label{k2}\delta_i(x; S_1; \cdots ; S_k) = (x; S_1; \cdots ; S_{i-1}; S_i \cup S_{i+1}; S_{i+2}; \cdots ; S_k) \ \ \mathrm{ for} \ \ 0 < i < k, \end{equation}
\begin{equation}\label{k3}\delta_k(x; S_1; \cdots ; S_k) = (\delta^0_{S_k}(x); \theta_{S_k} (S_1); \cdots ; \theta_{S_k} (S_{k-1})). \end{equation}
Here, for $S \subset [n]$ , we write $\theta_S$ for the unique order-preserving map from $[n] - S$ to $[n - \# (S)]$.
Then, it is not so hard to check that $T(X)$ is a simplicial manifold by definitions. 
Although the definition of $T(X)$ seems complicated, here is Figure \ref{sle6} on a triangulation with $k=2$ and $k=3$.

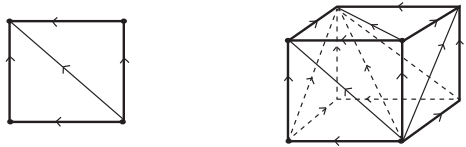
\begin{figure}[h]
\begin{picture}(-30,32)

\put(170,-27){\pc{IKmaps9}{0.4526}}
\end{picture}
\

\

\caption{\label{sle6} The canonical triangular decompositions of the square and the cube.}
\end{figure}
Next, we will show Lemma \ref{le1}. Given $\mathfrak{t}=(t_1, \dots, t_k) \in [0,1]^k$, we may choose a sequence $S_{\mathfrak{t}} :=(i_1, \dots,i_k) \in \{ 1,\dots, k\}^k $ with $ 1 \geq t_{i_1} \geq\cdots \geq t_{i_k}\geq 0 $ such that $i_1, \dots,i_k$ are mutually distinct. Then, for $x \in X_n$, we correspond $ \Phi ( t_1 ,\dots t_k, x) $ to
$$\bigl( (1 -t_{i_1}, t_{i_1} - t_{i_2} , \dots, t_{i_{k-1}} - t_{i_k} ,t_{i_k} ) , (x ; i_1; \cdots;i_k )\bigr) \in \Delta^k \times T(X)_k $$
Then, we can verify, by \eqref{k1} and \eqref{k3}, that the correspondence descends to a continuous map $\Phi: | \!| X | \!| \ra| \!| T (X) | \!|_{\Delta} $ on geometric realizations. Furthermore,
\begin{lem}\label{le1} For any cubical manifold $X$, the map $\Phi: | \!| X | \!| \ra | \!| T (X) | \!|_{\Delta} $ is a homeomorphism.
\end{lem}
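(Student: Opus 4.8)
The plan is to make $\Phi$ completely explicit on the interior cells, read off from that formula that $\Phi$ is a bijection, and then verify that $\Phi^{-1}$ is continuous by the universal property of the quotient. Throughout I would use two standard facts about fat realizations of a $\square$-set (resp.\ a $\Delta$-set) with no degeneracies: $\| X\|$ (resp.\ $\|T(X)\|_{\Delta}$) carries the quotient topology from $\bigsqcup_{p}I^{p}\times X_{p}$ (resp.\ from $\bigsqcup_{k}\Delta^{k}\times T(X)_{k}$), and every point has a unique \emph{interior representative} --- a pair $(\mathfrak{t},x)$ with $x\in X_{n}$ and $\mathfrak{t}\in\mathrm{int}\,I^{n}$, resp.\ a pair $(\mathbf{s},c)$ with $c\in T(X)_{k}$ and $\mathbf{s}\in\mathrm{int}\,\Delta^{k}$. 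Since $\Phi$ is already known to be well defined and continuous, it then only remains to analyse it on these representatives.

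First I would compute $\Phi$ on an interior point $(\mathfrak{t},x)\in\mathrm{int}\,I^{n}\times X_{n}$. Let $1>t^{(1)}>\dots>t^{(k)}>0$ be the distinct values occurring among $t_{1},\dots,t_{n}$, and let $S_{j}\subset[n]$ collect the indices $a$ with $t_{a}=t^{(j)}$; choose the ordering $i_{1},\dots,i_{n}$ so that $S_{1}$ comes first, then $S_{2}$, and so on. The $\Delta^{n}$-coordinate of $\Phi(\mathfrak{t},x)$ then has a vanishing entry exactly at each position interior to a block, while its entries $1-t^{(1)},\,t^{(1)}-t^{(2)},\dots,t^{(k)}$ at the block boundaries are strictly positive --- in particular the extreme entries, since $0<t^{(k)}\le t^{(1)}<1$. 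Consequently only the identifications \eqref{k2} are needed; they collapse $(x;i_{1};\cdots;i_{n})$ to $(x;S_{1};\cdots;S_{k})$, so that
$$ \Phi(\mathfrak{t},x)=\bigl((1-t^{(1)},\,t^{(1)}-t^{(2)},\,\dots,\,t^{(k-1)}-t^{(k)},\,t^{(k)}),\ (x;S_{1};\cdots;S_{k})\bigr), $$
an interior representative of $\|T(X)\|_{\Delta}$. Conversely, given an interior representative $(\mathbf{s},(x;S_{1};\cdots;S_{k}))$ with $\mathbf{s}=(s_{0},\dots,s_{k})$ and $x\in X_{n}$ (so $n=\#S_{1}+\dots+\#S_{k}$), I would set $t^{(j)}:=1-(s_{0}+\dots+s_{j-1})$ and $t_{a}:=t^{(j)}$ for $a\in S_{j}$; since all $s_{i}>0$ the numbers $1>t^{(1)}>\dots>t^{(k)}>0$ are strictly decreasing and lie in $(0,1)$, so $\mathfrak{t}\in\mathrm{int}\,I^{n}$, and the displayed formula shows that this point is carried by $\Phi$ to $(\mathbf{s},(x;S_{1};\cdots;S_{k}))$. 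Because interior representatives are unique and the manifolds $X_{n}$ are pairwise disjoint, these two assignments are mutually inverse on underlying sets, so $\Phi$ is a bijection.

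For the homeomorphism property I would prove that $\Phi^{-1}$ is continuous. By the universal property of the quotient it suffices to check that, for every $n$ and every partition $S=(S_{1};\cdots;S_{k})$ of $[n]$, the composite $\Delta^{k}\times X_{n}\to\|T(X)\|_{\Delta}\xrightarrow{\ \Phi^{-1}\ }\|X\|$, $(\mathbf{s},x)\mapsto\Phi^{-1}[\mathbf{s},(x;S)]$, is continuous. By the computation above this composite equals $q_{X}\circ(F\times\mathrm{id}_{X_{n}})$, where $q_{X}\colon I^{n}\times X_{n}\to\|X\|$ is the canonical map and $F\colon\Delta^{k}\to I^{n}$ is the \emph{affine} map sending $\mathbf{s}$ to the point $\mathfrak{t}$ with $t_{a}=1-(s_{0}+\dots+s_{j-1})$ for $a\in S_{j}$; hence it is continuous. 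One then checks that this description is consistent across the faces of $\Delta^{k}$: a vanishing barycentric coordinate $s_{p}$ induces precisely the identification of $\|X\|$ dual to the face operator $\delta_{p}$ of $T(X)$ --- $s_{0}=0$ forces $t_{a}=1$ on $S_{1}$, which in $\|X\|$ is absorbed by the cube face $\delta^{1}_{S_{1}}$, matching \eqref{k1}; $s_{k}=0$ forces $t_{a}=0$ on $S_{k}$, absorbed by $\delta^{0}_{S_{k}}$, matching \eqref{k3}; and $s_{p}=0$ with $0<p<k$ is the coincidence $t^{(p)}=t^{(p+1)}$, absorbed by the merging \eqref{k2}. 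Thus $\Phi^{-1}$ is continuous, and $\Phi$ is a homeomorphism.

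The hard part is exactly this last bookkeeping: matching the three boundary rules \eqref{k1}--\eqref{k3} of $T(X)$ with the three geometric degenerations of a point of the cube (a coordinate reaching $1$, a coordinate reaching $0$, and two coordinates coinciding), and in particular seeing that the ambiguity in the choice of the ordering $S_{\mathfrak{t}}$ in the presence of ties is resolved precisely by \eqref{k2}. This is the manifold-parametrised version of Clauwens' triangulation of $\square$-sets \cite[\S 3.2]{Cla}: the factors $X_{n}$ are simply carried along unchanged, so the smooth structure introduces no additional point-set difficulty.
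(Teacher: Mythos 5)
Your proposal is correct and follows essentially the same route as the paper: your affine map $F\colon\Delta^{k}\to I^{n}$, $t_a=1-(s_0+\cdots+s_{j-1})$ for $a\in S_j$, is exactly the paper's cell-wise inverse $\Upsilon\circ\sigma^{-1}\circ\mu_{S_1;\dots;S_k}$ written in coordinates, and the face-compatibility bookkeeping with \eqref{k1}--\eqref{k3} is the same verification the paper invokes for $\Psi$. The only cosmetic difference is that you establish bijectivity via unique interior representatives rather than by directly checking $\Psi\circ\Phi=\Phi\circ\Psi=\mathrm{id}$, which is an equivalent argument.
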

\begin{proof} To construct the inverse mapping $\Psi $, we prepare notations.
Suppose $(x; S_1;\dots; S_k) $ with $x \in X_n$ and $n\geq k$.
We take the composite map
$$\mu_{ S_1;\dots; S_k} := \epsilon^{|S_1|+|S_2| + \cdots + |S_{k}| }\circ\cdots \circ \epsilon^{|S_1|+|S_2| } \circ \epsilon^{ |S_1|}: \Delta^k \lra \Delta^n. $$
Decompose $ (S_1;\dots; S_k) \subset [n] $ as $(s_1, \dots, s_n) \in \mathbb{N}^n$ pointwise.
Furthermore, we regard this $(s_1, \dots, s_n)$ as a permutation $\sigma \in \mathfrak{S}_n$ and set up another map defined by
$$ \Upsilon : \Delta^n \lra I^n; \ ( t_0' , \dots, t_n') \longmapsto (t_1'+\cdots + t_{n}' ,t_2'+\cdots + t_{n}' ,\dots, t'_{n-1} +t'_{n}, t_n'). $$
Denote by $P_{n,k}$ the set of $k$-partitions of $[n]$ with discrete topology.
Then, we define a map $\Psi: \Delta^k \times X^n \times P_{n,k} \ra I^n \times X^n $ by
$$ \Psi (t_0' , \dots, t_k, x; S_1;\dots; S_k) := ( \Upsilon \circ \sigma^{-1} \circ \mu_{ S_1;\dots; S_k} ( t_0' , \dots, t_k') ,x ) \in I^n \times X^n$$
Then, by \eqref{k1}--\eqref{k3}, this $\Psi$ defines a continuous map $| \!| T (X) | \!|_{\Delta} \ra | \!| X | \!|$. Moreover, it is not hard to check that $\Psi \circ \Phi$ and $\Phi \circ \Psi$ are identities by construction. This completes the proof.
\end{proof}
Following the proof, we can define the pullback $\Psi^*( \phi) \in A^{k,\ell} (T(X)) $ of any form $\phi \in A^{k,\ell} (X) $. Moreover, we can similarly verify that
\begin{lem}\label{le2} The maps $ \Phi ^* : A^{*,*}_{\Delta} (T(X)) \ra A^{*,*}_{\Delta} (X) $ and $ \A^{*,*}_{\Delta} (T(X)) \ra \A^{*,*} (X)$ are bigraded ring isomorphisms.
Here, the inverse mappings are constructed from the pullback $\Psi^*$.
\end{lem}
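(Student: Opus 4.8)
The plan is to prove that $\Phi^*$ and $\Psi^*$ are mutually inverse morphisms of bigraded algebras, where $\Phi$ and $\Psi$ are the homeomorphisms produced in Lemma~\ref{le1}; the only real work is to check that these piecewise-affine identifications carry smooth compatible families of forms to smooth compatible families of forms, after which everything is formal.

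First I would establish that $\Phi^*$ is well defined. Recall from the proof of Lemma~\ref{le1} that on the chamber of $I^k$ cut out by $t_{\pi(1)}\ge t_{\pi(2)}\ge\cdots\ge t_{\pi(k)}$ for a fixed permutation $\pi$, the map $\Phi$ restricted to $I^k\times X_k$ is the product of the affine isomorphism $(t_1,\dots,t_k)\mapsto(1-t_{\pi(1)},\,t_{\pi(1)}-t_{\pi(2)},\,\dots,\,t_{\pi(k)})$ onto $\Delta^k$ with the locally constant map $x\mapsto(x;\pi(1);\cdots;\pi(k))$ into $T(X)_k$. Hence, for $\phi\in A^{k,\ell}_{\Delta}(T(X))$, the pullback $\Phi^*\phi$ restricts to a $C^\infty$-form of type $(k,\ell)$ on the product of each such chamber with $X_k$, using that $\phi$ is by hypothesis extendable to a $C^\infty$-form beyond $\Delta^k$. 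The chamberwise forms then patch to a genuine $C^\infty$-form on $I^k\times X_k$, extendable to $\R^k\times X_k$: along an interior wall $\{t_a=t_b\}$ separating the chambers of $\pi$ and $\pi\cdot(a\,b)$, the two images under $\Phi$ lie in the top simplices of $\Delta^k$ glued along the face $\epsilon^{j}$ with $j$ the common slot of $a$ and $b$, and the compatibility condition defining a simplicial form, together with the explicit face formulas \eqref{k1}--\eqref{k3}, forces the two pullbacks and all of their derivatives to agree there; an induction on $k$ over the lower faces of the triangulation handles the boundary $\partial I^k$ likewise. Finally the cubical compatibility $(\delta_i^{\varepsilon}\times\id)^*\Phi^*\phi=(\id\times\delta_i^{\varepsilon})^*\Phi^*\phi$ is read off directly from \eqref{k1}--\eqref{k3}, so $\Phi^*\phi\in A^{k,\ell}(X)$; as $\Phi$ respects the two product decompositions this assignment is bigraded, and it is a ring homomorphism since pullback of forms commutes with the wedge product.

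Symmetrically, I would check that $\Psi^*$ is well defined from the description $\Psi=(\Upsilon\circ\sigma^{-1}\circ\mu_{S_1;\dots;S_k})\times\id_{X_n}$ on $\Delta^k\times X_n\times\{(S_1;\dots;S_k)\}$ appearing in the proof of Lemma~\ref{le1}: each of $\mu_{S_1;\dots;S_k}$, the coordinate permutation $\sigma^{-1}$, and $\Upsilon$ is affine, so $\Psi$ is affine on each such piece, whence $\Psi^*$ sends a $C^\infty$-form on $I^n\times X_n$ to a $C^\infty$-form of the same bidegree on $\Delta^k\times T(X)_k$; patching across the walls and compatibility with the simplicial face maps again come from \eqref{k1}--\eqref{k3}. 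Since $\Psi\circ\Phi=\id_{\|X\|}$ and $\Phi\circ\Psi=\id_{\|T(X)\|_{\Delta}}$ by Lemma~\ref{le1}, and these are equalities of smooth maps on the open dense union of the interiors of the top chambers, we obtain $\Phi^*\circ\Psi^*=\id$ and $\Psi^*\circ\Phi^*=\id$ on a dense set, hence everywhere by continuity. This shows $\Phi^*$ is a bigraded ring isomorphism with inverse $\Psi^*$; the same argument applies verbatim to $\A^{*,*}_{\Delta}(T(X))\to\A^{*,*}(X)$, the $k=0$ slice simply comparing forms on $X_n$ with forms on $T(X)_0$.

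The step I expect to be the main obstacle is precisely the smoothness of the patched forms across the interior walls of the canonical triangulation --- upgrading the evident continuity of $\Phi^*\phi$ (and of $\Psi^*\psi$) to $C^\infty$-regularity, including extendability beyond the cube. This is exactly where the defining compatibility of a (co)simplicial form is indispensable: near such a wall the top-degree form is pulled back from a lower-dimensional face, so its germ depends only on data shared by the two adjacent chambers. Granting this, the remaining verifications --- the bigrading, the ring structure, and $\Phi^*\Psi^*=\Psi^*\Phi^*=\id$ --- are routine, following \cite{Dup,Cla}.
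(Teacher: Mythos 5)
Your overall strategy (pull back along the mutually inverse homeomorphisms of Lemma \ref{le1} and check smoothness chamber by chamber) is the same one the paper gestures at, but the step you yourself single out as the crux is not justified, and in fact the justification you offer is wrong. The compatibility condition $(\epsilon^{i}\times\id)^*\phi^{(p)}=(\id\times\delta_i)^*\phi^{(p-1)}$ defining a simplicial form constrains only the \emph{pullbacks} of $\phi^{(p)}$ to the faces, i.e.\ the tangential data along $\epsilon^i(\Delta^{p-1})\times T(X)_p$; it says nothing about normal components or higher jets there. So along an interior wall $\{t_a=t_b\}$ of $I^k$, where the two adjacent chambers are sent by $\Phi$ into two \emph{different} components $\Delta^k\times\{(x;\dots)\}$ of $\Delta^k\times T(X)_k$ glued only through the face relations \eqref{k1}--\eqref{k3}, the two chamberwise pullbacks are forced to agree only after restriction to the wall, not ``together with all of their derivatives''. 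A concrete obstruction at $k=2$: take a simplicial $1$-form whose component on $\Delta^2\times\{(x;2;1)\}$ is $s_0s_1s_2\,ds_1$ and which vanishes on $\Delta^2\times\{(x;1;2)\}$ (all face pullbacks of $s_0s_1s_2\,ds_1$ vanish, so nothing in the compatibility conditions excludes this). Its chamberwise pullback under $\Phi$ is $0$ on $\{t_1\ge t_2\}$ and $(1-t_2)(t_2-t_1)t_1\,(dt_2-dt_1)$ on $\{t_2\ge t_1\}$, which is continuous but not $C^1$ across $t_1=t_2$ (the normal derivative of the coefficient jumps from $0$ to $2t(1-t)$). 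Hence $\Phi^*\phi$ need not lie in $A^{k,\ell}(X)$ as defined in Definition \ref{form}, and your argument that $\Phi^*$ is a well-defined inverse to $\Psi^*$ collapses at exactly this point; what your construction actually yields is that $\Psi^*$ is an injective morphism of bigraded DGAs, with chamberwise piecewise-smooth ``inverse'' data, and upgrading this to the stated isomorphism (or replacing it by a homotopy equivalence, which is all that the proof of Theorem \ref{thddm1} really needs) requires an additional idea, e.g.\ working with piecewise-$C^\infty$ forms or invoking Dupont-type integration/extension machinery rather than raw pullback.

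A second, smaller gap: your closing claim that ``the same argument applies verbatim'' to the map $\mathcal{A}^{*,*}_{\Delta}(T(X))\ra\mathcal{A}^{*,*}(X)$ does not parse. Here $\mathcal{A}^{k,\ell}_{\Delta}(T(X))=\mathcal{A}^{\ell}(T(X)_k)$ is a product of form spaces over \emph{all} components $X_n\times P_{n,k}$ with $n\ge k$, while $\mathcal{A}^{k,\ell}(X)=\mathcal{A}^{\ell}(X_k)$ involves only the level-$k$ manifold; there is no cube/simplex coordinate to reparametrize, $\Phi$ does not act levelwise, and any comparison map (say restriction to the $n=k$, singleton-partition components, or a signed sum over orderings as in Clauwens' chain-level triangulation) is visibly not bijective on the nose. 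So this half of the lemma needs its own definition of the map and its own argument (at the level of cohomology or homotopy equivalence), which your proposal does not supply.
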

We now use the above lemmas to prove the theorems.
\begin{proof}[Proof of Theorem \ref{thddm1}]
For any simplicial manifold $Y$, Dupont \cite[Theorem 2.3]{Dup} constructed a chain map
$\mathcal{T}: A^{k,\ell}_{\Delta} (Y) \ra \A^{k,\ell}_{\Delta} (Y) $ which gives a homotopy equivalence.
Hence, when $Y=T(X)$, the composite $ \Phi ^* \circ \mathcal{T} \circ \Psi ^* : A^{*,*} (X) \ra \A^{*,*} (X) $ plays role of a desired chain-map.
\end{proof}

\begin{proof}[Proof of Theorem \ref{thm2}]
Dupont \cite[Theorem 2.14]{Dup} considered the map in the $E_{\infty}$-term induced from $\mathcal{T}: A^{k,\ell}_{\Delta} (Y) \ra \A^{k,\ell}_{\Delta} (Y) $, and
the induced map $ A^{*}_{\Delta} (Y) \ra A^{*}_{\Delta} (|\!| Y|\!| )$ is 
multiplicative. The above maps $\Phi$ and $\Psi$ are multiplicative by definitions.
Thus, the map in the $E_{\infty}$-term induced from $ \Phi ^* \circ \mathcal{T} \circ \Psi^*$ is also multiplicative. This completes the proof.
\end{proof}

\section{Some computation of quandle homology of smooth quandles.}\label{B1}

In this section, we will compute rack quandle homology of ``linear" quandles.
Fix $\omega \in \R\setminus \{0,1 \} $ and $n \in \N$.
Let us assume that $ X$ is either a quandle on $\R^n $ with $x \lhd y= \omega x + (1- \omega )y$ or a quandle on $(\R/ \Z) ^n$ with $x \lhd y= 2y- x$. (cf.
the classification of smooth homogenous manifolds of dimension $\leq 2$; see Ishikawa \cite[\S 6]{Ishi}).
\begin{prop}\label{dd}
If $\omega \neq \pm 1$ and $\omega \in \Q$, $H_2^R (X;\Z)$ is $\Z$.
However, if $ \omega =- 1$, $H_2^R (X;\Z)$ is isomorphic to $(\R^n \wedge_{\Q} \R^n) \oplus \Z $.

If $X= (\R/ \Z) ^n$ with $x \lhd y= 2y- x$, then $H_2^R (X;\Z)$ is isomorphic to $(\R/\Q)^n \wedge_{\Q} (\R/\Q)^n\oplus \Z$. 
\end{prop}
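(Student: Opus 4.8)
The plan is to compute $H_2^R(X;\Z)$ directly from the rack chain complex $C_*^R(X)$ in low degrees, using the known splitting of rack homology into a "degenerate" part and a "quandle" part. Recall that $C_2^R(X) = \Z\langle X^2\rangle$ and $C_1^R(X) = \Z\langle X\rangle$, with $\partial_2^R(x,y) = (x) - (x\lhd y)$ and $\partial_1^R = 0$. There is the standard degenerate subcomplex $C_*^D(X)$ generated by diagonal tuples; in degree $2$ it is generated by the $(x,x)$, and $\partial_2^R(x,x)=(x)-(x\lhd x)=(x)-(x)=0$, so $C_2^D(X)\cong\Z\langle X\rangle$ is already a cycle, contributing the free rank-one-per-orbit piece that, after we take the transitive/path-components into account, yields the summand $\Z$ appearing in each case. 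Thus the real content is to compute the quandle homology $H_2^Q(X;\Z) = H_2\big(C_*^R(X)/C_*^D(X)\big)$ and show it is $0$ when $\omega\in\Q$, $\omega\neq\pm1$; is $\R^n\wedge_\Q\R^n$ when $\omega=-1$; and is $(\R/\Q)^n\wedge_\Q(\R/\Q)^n$ in the toral case.

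The key computation is to identify $Z_2^Q = \ker(\bar\partial_2)$ and $B_2^Q = \im(\bar\partial_3)$ inside $\Z\langle X^2\rangle/(\text{degenerates})$. First I would analyze $\ker\partial_2^R$: a chain $\sum n_{x,y}(x,y)$ is a cycle iff $\sum n_{x,y}\big((x)-(x\lhd y)\big)=0$ in $\Z\langle X\rangle$. For the linear quandle $x\lhd y=\omega x+(1-\omega)y$ on $\R^n$, the map $y\mapsto x\lhd y$ and $x\mapsto x\lhd y$ are affine isomorphisms, and the cycle condition becomes a condition recognizable, after passing to the associated group / using the "second homology = Schur-multiplier-type" description, as a bilinear antisymmetrization. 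Concretely, I expect the cycle group modulo boundaries to be computed by sending $(x,y)$ to $x'\wedge y'$ where $x',y'$ are the images of $x,y$ in the relevant $\Q$-vector space ($\R^n$ or $\R^n/\Q^n$), the group law coming from the additive structure hidden in the linear operation; the $\Q$-coefficients enter because $\omega\in\Q$ forces the relations $(\omega x)\wedge(\omega y)$ to be identified with $\omega^2(x\wedge y)$, and the boundary relations from $C_3^R$ kill exactly the symmetric/scalar redundancies. When $\omega=-1$ one has $x\lhd y = 2y-x$ and $\omega^2=1$, so no scalar collapse occurs and one gets the full $\R^n\wedge_\Q\R^n$; when $\omega\neq\pm1$ is rational, $\omega^2\neq1$ forces $(1-\omega^2)(x\wedge y)=0$, hence $x\wedge y=0$ after tensoring with $\Q$, killing the wedge part and leaving only the $\Z$. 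The toral case is identical with $\R^n$ replaced by $(\R/\Z)^n$, whose maximal $\Q$-vector-space quotient relevant to wedge products is $(\R/\Q)^n$ (the torsion $\Z^n/\Z^n$-type contributions being annihilated by $2$ and washed out rationally, since $\omega=2$, $\omega-1=1$ acts invertibly where it needs to).

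The main obstacle, and the step I would spend the most care on, is the rigorous identification of $H_2^Q(X;\Z)$ with the wedge product: this requires writing down explicit chain-level maps in both directions — a map $C_2^R(X)\to \Lambda^2_\Q(W)\oplus(\text{something})$ killing boundaries, and a section realizing each $w_1\wedge w_2$ by an explicit cycle (e.g.\ a combination like $(w_1,w_2)-(w_2,w_1)$ suitably pushed into $X$ via the affine identifications) — and then checking that the $\partial_3$-boundaries correspond precisely to the defining relations of $\Lambda^2_\Q$ (bilinearity and alternation) together with the scalar relation forced by $\omega$. This is a $\partial_3$-computation: $\partial_3^R(x_1,x_2,x_3)$ expands into six terms, and one must verify that modulo degenerates these generate exactly the subgroup of $Z_2^Q$ corresponding to $\{w\wedge w,\ (w_1+w_2)\wedge w_3 - w_1\wedge w_3 - w_2\wedge w_3,\ (1-\omega^2)(w_1\wedge w_2)\}$. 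I would organize this by choosing a convenient basepoint (using transitivity, via the description $X=G/H$ and the operation \eqref{qkettei}, to normalize one coordinate), reducing $X^2$-indexed chains to $W\times W$-indexed ones, and then it becomes a finite bookkeeping check. Comparison with the finite-order case treated in \cite{LN} and with the Mochizuki-type computations for Alexander quandles should serve as a guide and a sanity check on the final answer.
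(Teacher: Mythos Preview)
Your approach is not wrong in spirit, but it takes a much longer route than the paper and essentially proposes to re-derive a result that the paper simply cites. The paper's proof is three lines: it invokes Clauwens' computation \cite{Cla2}, which for an Alexander quandle $X$ with multiplier $\omega$ gives directly
\[
H_2^R(X;\Z)\;\cong\;\Z\ \oplus\ \frac{X\otimes_{\Z}X}{\langle\, x\otimes y-\omega\, y\otimes x\,\rangle},
\qquad n(a,b)\longmapsto (n,[(a-b)\otimes b]).
\]
From here the three cases are immediate tensor-algebra: since $\R\otimes_{\Z}\R\cong\R\otimes_{\Q}\R$, the relation $x\otimes y=\omega^2 x\otimes y$ kills the quotient when $\omega\in\Q\setminus\{\pm1\}$; when $\omega=-1$ the relation is exactly antisymmetry, giving $\R^n\wedge_{\Q}\R^n$; and for $(\R/\Z)^n$ one uses $\R/\Z\cong \Q/\Z\oplus(\R/\Q)$ together with $\Q/\Z\otimes\Q/\Z=0$. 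No chain-level analysis of $\partial_3^R$, no degenerate/quandle splitting, and no hand-built map to $\Lambda^2$ is needed.

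Your plan --- split off the degenerate part via Litherland--Nelson, then identify $H_2^Q$ with a wedge by explicitly matching $\partial_3$-boundaries against the relations of $\Lambda^2_{\Q}$ --- would, if carried out, amount to reproving Clauwens' formula in this special case. That is a legitimate exercise, but two cautions: first, the natural map from $C_2^R$ to the tensor quotient is $(a,b)\mapsto (a-b)\otimes b$, not $(a,b)\mapsto a\otimes b$ (these happen to agree in the antisymmetric quotient when $\omega=-1$, but not in general), so your proposed ``$(x,y)\mapsto x'\wedge y'$'' would need adjusting before the $\partial_3$-check goes through; second, the ``main obstacle'' you flag --- verifying that $\partial_3$-boundaries generate exactly the bilinearity, alternation, and $(1-\omega^2)$-torsion relations --- is precisely the content of \cite{Cla2}, so you would save considerable effort by citing it.
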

\noindent
The key for the proof is the result of Clauwens \cite{Cla2}.
Precisely, 
the paper computed the rack homology from the isomorphism
\begin{equation}\label{z1} H_2^R (X;\Z) \cong \Z \oplus \frac{X \otimes_{\Z} X }{\{ x\otimes y - \omega y \otimes x\}_{x,y \in X}}; \ \ \ \ \ \ \ n (a,b) \longmapsto (n , [ (a-b) \otimes b]).
\end{equation}
\begin{proof}
We will compute the right hand side in details.
Recall elementary computations
\begin{equation}\label{z2} \Q/\Z \otimes \Q/\Z =0, \ \ \Q \otimes_{\Z} \Q \cong \Q, \ \ \ \mathrm{ and} \ \ \ \R \otimes_{\Z} \R \cong \R \otimes_{\Q} \R.
\end{equation} 
Hence, if $X=\R^n$ with $\omega \neq \pm 1 $, one has $H_2^R (X;\Z) \cong \Z $, because $ x \otimes y = \omega x \otimes \omega y= \omega^2 x \otimes y$ in \eqref{z1}.
On the other hand, if $\omega =-1 $, the right hand side of \eqref{z1} turns out to be the exterior product as stated above.

Finally, we consider $X= (\R/ \Z) ^n$ with $x \lhd y= 2y- x$.
Notice $\R /\Z \cong \Q/\Z \oplus (\bigoplus_{\lambda} \Q)$ as a $\Z$-module, where $\lambda$ runs over an uncountable index set. Thus, $\R /\Q \cong\bigoplus_{\lambda} \Q$.
Thus, the computation of $H_2^R (X;\Z) $ immediately follows from \eqref{z1} and \eqref{z2}.
\end{proof}
\begin{cor}\label{ddcor}
Let $Q=\R^2$ be the quandle with $x\lhd y =2y-x$.
Then, the map $\mathcal{C}: Q^2 \ra \R$ which takes $((x_1,y_1),(x_2,y_2)) $ to $x_1y_2 - x_2 y_1$ is a continuous 2-cocycle and is not null-cohomologous.
\end{cor}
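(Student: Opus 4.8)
The plan is to recognize $\mathcal{C}$ as the standard area form on $\R^2$. Write $\omega(u,v):=u_1v_2-u_2v_1$ for $u=(u_1,u_2)$, $v=(v_1,v_2)$, so that $\mathcal{C}(u,v)=\omega(u,v)$; recall that $\omega$ is $\R$-bilinear and alternating (in particular $\omega(w,w)=0$), and that the quandle operation on $Q=\R^2$ is $u\lhd v=2v-u$. Continuity of $\mathcal{C}$ is immediate, as $\mathcal{C}$ is a quadratic polynomial in the four coordinates. So it remains to verify the $2$-cocycle identity and the non-triviality.

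For the cocycle condition I would first record that in degree $3$ the rack boundary is
\[\partial^R_3(x_1,x_2,x_3)=(x_1,x_3)-(x_1\lhd x_2,x_3)-(x_1,x_2)+(x_1\lhd x_3,x_2\lhd x_3),\]
the $i=1$ summand vanishing because there the ``acted'' tuple agrees with the deleted one. Applying $\mathcal{C}=\omega$ and expanding by bilinearity, using $\omega(w,w)=0$, gives
\[\omega(2x_2-x_1,x_3)=2\omega(x_2,x_3)-\omega(x_1,x_3),\qquad \omega(2x_3-x_1,2x_3-x_2)=\omega(x_1,x_2)-2\omega(x_1,x_3)+2\omega(x_2,x_3).\]
Substituting these into $\mathcal{C}\bigl(\partial^R_3(x_1,x_2,x_3)\bigr)$, all six terms cancel, so $\delta\mathcal{C}=0$ and $\mathcal{C}$ is a $2$-cocycle.

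For non-triviality, suppose $\mathcal{C}$ is a coboundary, i.e.\ $\mathcal{C}=\delta g$ for some $1$-cochain $g\colon\R^2\to\R$ — I do not even need to assume $g$ continuous. Since $\partial^R_2(x_1,x_2)=(x_1)-(x_1\lhd x_2)$, this says $g(u)-g(2v-u)=\omega(u,v)$ for all $u,v\in\R^2$. Putting $u=0$ yields $g(0)-g(2v)=\omega(0,v)=0$ for every $v$; as $v\mapsto 2v$ is surjective onto $\R^2$, $g$ is constant, whence the left-hand side is identically $0$ while $\omega$ is not (e.g.\ $\omega((1,0),(0,1))=1$) — a contradiction. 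Hence $\mathcal{C}$ is not null-cohomologous. This is consistent with Proposition~\ref{dd} together with \eqref{z1}, which exhibit $H_2^R(\R^2;\Z)\cong(\R^2\wedge_{\Q}\R^2)\oplus\Z$ as very large; the content of the corollary is that $\mathcal{C}$ gives an explicit nonzero class.

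I do not expect a genuine obstacle here: the whole argument is a short direct computation. The only points deserving a little care are the signs in $\partial^R_3$ — in particular the vanishing of the $i=1$ term — and the observation that non-triviality follows at once from the substitution $u=0$, so that one need not invoke the full homology computation underlying Proposition~\ref{dd}.
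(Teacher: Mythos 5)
Your computation is correct, but it follows a genuinely different route from the paper. You verify everything by hand: the degree-$3$ boundary formula (including the vanishing of the $i=1$ term) and the bilinear expansion do give $\mathcal{C}\circ\partial^R_3=0$, and your non-triviality argument is sound — if $\mathcal{C}=g\circ\partial^R_2$, i.e.\ $g(u)-g(2v-u)=\omega(u,v)$, then $u=0$ forces $g$ to be constant (since $v\mapsto 2v$ is onto), contradicting $\omega\not\equiv 0$; note this rules out coboundaries of arbitrary, not just continuous, $1$-cochains, which is the relevant (and stronger) statement. The paper instead derives the corollary from Clauwens' computation of $H_2^R$: by the isomorphism \eqref{z1} the map $\mathcal{C}'(a,b)=(a-b)\wedge b$ is a universal $2$-cocycle with values in $\R^2\wedge_{\Q}\R^2$, and $\mathcal{C}=q\circ\mathcal{C}'$ for the $\Q$-linear functional $q\bigl((x,y)\wedge(z,w)\bigr)=xw-yz$, whence non-triviality. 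Your argument is more elementary and self-contained — it needs neither Proposition \ref{dd} nor \cite{Cla2} — while the paper's approach explains where the cocycle comes from and situates it inside the full description of $H_2^R(\R^2;\Z)\cong(\R^2\wedge_{\Q}\R^2)\oplus\Z$, from which many further cocycles can be extracted by choosing other functionals $q$. Either proof is acceptable; just keep your sign conventions for $\partial^R$ consistent with the paper's, as you did.
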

\begin{proof} Consider the $\Q$-linear map $q : \R^2 \wedge_{\Q} \R^2 \ra \R$ which takes $(x,y) \wedge (z,w)$ to $ xw-yz$. According to \eqref{z1}, the map $\mathcal{C}': Q^2 \ra \R^2 \wedge_{\Q} \R^2 $ which sends $(a,b) $ to $ (a-b ) \wedge b$ gives a universal 2-cocycle. Thus, the composite $ q \circ \mathcal{C}'$ is not null-cohomologous.
Noticing $ \mathcal{C}= q \circ \mathcal{C}'$ completes the proof.
\end{proof}

Moreover, corresponding Example \ref{opp}, for a field $F$, we mention the second cohomology of $X_F$,
where $G=PSL_2(F), \ H= \bigl\{ \left(
\begin{array}{cc}
1 & a \\
0 & 1
\end{array}
\right) \bigr\} _{a \in F }$ and $ z_0 = \left(
\begin{array}{cc}
1 & 1 \\
0 & 1
\end{array}
\right)$.
Moreover, we recall the Milnor $K_2$-group $K_2(F)$ which is isomorphic to $F^{\times } \otimes_{\Z} F^{\times } / \{ a \otimes (1-a)\}_{a \in F \setminus \{ 0,1\}} $.
If $F=\C$, $K_2(F)$ is known to be uniquely divisible, i.e., a direct sum of $\Q$'s,
\begin{prop}[{A special result of \cite[Corollary 8.5]{Nos3} }]\label{dd44} If $F=\C $, then
$H_2^R (X_F ;\Z) \cong \Z \oplus \C \oplus K_2(\C ) $.

Furthermore, if $F=\R $, then
$H_2^R (X_F ;\Z) \cong \Z \oplus \Z \oplus \R \oplus K_2(\C)^{+} $, where $K_2(\C)^{+} $ is the
invariant part of $K_2(\C ) $ with respect to the conjugate operation $\bar{}: \C \ra \C$.
\end{prop}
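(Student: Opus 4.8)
The plan is to deduce Proposition \ref{dd44} from \cite[Corollary 8.5]{Nos3}, which computes $H_2^R(X_F;\Z)$ for an arbitrary field $F$, and then simply to evaluate that formula at $F=\C$ and $F=\R$. First I would recall the shape of the general statement. For any connected quandle the rack--quandle degeneracy splitting gives $H_2^R(X_F;\Z)\cong\Z\oplus H_2^Q(X_F;\Z)$, where the free summand $\Z$ is $H_1^R(X_F)$ --- this is the same $\Z$ that already appears in \eqref{z1} for linear quandles --- so everything of interest sits in $H_2^Q(X_F;\Z)$. The content of \cite[Corollary 8.5]{Nos3} is that, for $X_F=(PSL_2(F),H,z_0)$, the latter group is assembled, through the Inoue--Kabaya chain map \cite{IK} and Matsumoto's presentation of $K_2$, out of the additive group of $F$ (the ``parabolic'' contribution of the unipotent subgroup $H$) and the Milnor group $K_2(F)=F^{\times}\otimes_{\Z}F^{\times}/\langle a\otimes(1-a)\rangle$. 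Granting this, the proposition reduces to reading off these two pieces for the two fields at hand.

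For $F=\C$ the only input needed is arithmetic: $\C^{\times}$ is divisible and $\C$ is algebraically closed, so by Matsumoto's theorem $K_2(\C)$ is uniquely divisible, and in particular the Steinberg relation does not collapse it (contrast the elementary vanishings in \eqref{z2}). Hence the Milnor summand is $K_2(\C)$ unchanged and the additive summand is just $\C$, giving $H_2^R(X_{\C};\Z)\cong\Z\oplus\C\oplus K_2(\C)$.

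For $F=\R$ the key observation is that $X_{\R}$ is the subquandle of $X_{\C}$ fixed by complex conjugation $c\colon\C\to\C$, and that the decomposition above is natural in $F$; so each summand of $H_2^Q(X_{\R};\Z)$ is the $c$-invariant part of the corresponding summand of the complex answer. The additive summand becomes $\C^{c}=\R$, and the Milnor summand becomes $K_2(\C)^{+}$; it is genuinely this invariant subgroup that appears --- not $K_2(\R)$ --- because the relevant comparison map is the one induced by $\R\hookrightarrow\C$ on $PSL_2$, so no $2$-torsion (which would enter through $K_2(\R)$ or $\{\pm1\}$) survives. The one feature not produced by naive invariant-taking is the second free summand $\Z$: this is a rank-one phenomenon special to the real case, traceable either to $\R^{\times}/(\R^{\times})^{2}\cong\Z/2$ or, geometrically, to the jump in $H_1$ when one passes from the space underlying $X_{\C}$, namely $(\C^2\setminus 0)/\pm$, to its real locus $(\R^2\setminus 0)/\pm\simeq S^1$; I would pin it down by inspecting the low-degree terms of the computation in \cite{Nos3}. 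Assembling the pieces gives $H_2^R(X_{\R};\Z)\cong\Z\oplus\Z\oplus\R\oplus K_2(\C)^{+}$.

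The main obstacle is the $F=\R$ case, and concretely the task of making the ``pass to conjugation-invariants'' principle rigorous: one must check that the homological machinery of \cite{Nos3} is functorial enough that $X_{\R}\hookrightarrow X_{\C}$ induces, summand by summand, the inclusion of invariant parts; that no $2$-torsion leaks in through $K_2$; and that the extra $\Z$ really splits off as a free summand, with no extension against $\R$ or $K_2(\C)^{+}$. Once that naturality is secured, the remainder is bookkeeping with \eqref{z1}, \eqref{z2}, Matsumoto's theorem, and the $\Z$-module structure of $\R$ and $\C$.
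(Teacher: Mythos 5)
The paper gives no independent argument for this proposition: it is stated, verbatim, as a special case of \cite[Corollary 8.5]{Nos3}, which computes $H_2^R(X_F;\Z)$ for the parabolic quandle over a field, and the two displayed isomorphisms are just that corollary read off at $F=\C$ and $F=\R$. So the first step of your plan --- reduce to the cited corollary and specialize --- is exactly what the paper does, and for $F=\C$ your bookkeeping (the split-off $\Z$, the additive summand $\C$, and $K_2(\C)$ untouched because it is uniquely divisible by Matsumoto's theorem) is consistent with that.

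The genuine gap is your treatment of $F=\R$. You do not actually know the shape of \cite[Corollary 8.5]{Nos3} there, and the route you propose instead --- ``the decomposition is natural in $F$, so each summand over $\R$ is the conjugation-invariant part of the corresponding summand over $\C$, plus an extra $\Z$ to be located later'' --- does not hold up as an argument. Functoriality along $X_{\R}\hookrightarrow X_{\C}$ would only give you a comparison map from the real answer to the $c$-invariants of the complex one, not an isomorphism onto them; and the analogous map in $K$-theory, $K_2(\R)\to K_2(\C)$ induced by $\R\hookrightarrow\C$, is emphatically not an isomorphism onto $K_2(\C)^{+}$ (it kills the order-two symbol $\{-1,-1\}$, among other things). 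That the correct summand is $K_2(\C)^{+}$ rather than $K_2(\R)$, that an additional free summand $\Z$ appears, and that no extension problems arise, is precisely the nontrivial content of the computation in \cite{Nos3}; none of it follows from the naive invariant-taking principle you invoke, and you concede as much when you defer the extra $\Z$ to ``inspecting the low-degree terms'' of the cited paper. As written, then, the $F=\R$ isomorphism is asserted, not proved: either quote \cite[Corollary 8.5]{Nos3} in the generality that covers $F=\R$ (as the paper does) or supply the actual computation; the conjugation-invariance heuristic cannot substitute for it.
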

Concerning quandles on the spheres, W. E. Clark and M. Saito \cite{CS} studied some phenomena of quandle 2-cocycles, together with a relation to knot invariants.


\small

\vskip 1pc

\normalsize

DEPARTMENT OF
MATHEMATICS
TOKYO
INSTITUTE OF
TECHNOLOGY
2-12-1
OOKAYAMA
, MEGURO-KU TOKYO
152-8551 JAPAN


\begin{thebibliography}{99}

\ifx\undefined\bysame
\newcommand{\bysame}{\leavevmode\hbox to3em{\hrulefill}\,}
\fi



\bibitem[CJKLS]{CJKLS}
J. S. Carter, D. Jelsovsky, S. Kamada, L. Langford, M. Saito, 
{\it Quandle cohomology and state-sum invariants of knotted curves and surfaces}, 
Trans. Amer. Math. Soc. {\bf 355} (2003) 3947--3989. 


\bibitem[CKS]{CKS} 
J. S. Carter, S. Kamada, M. Saito, {\it Geometric interpretation of quandle homology},
J. Knot Theory Ramifications
, {\bf 10}, (2001), 345--386.

\bibitem[Cla]{Cla} F.J.-B.J. Clauwens, {\it The algebra of rack and quandle cohomology}, J. Knot Theory Ramifications {\bf 11} (2011), 1487--1535.

\bibitem[Cla2]{Cla2} \bysame, 
{\it The adjoint group of an Alexander quandle}, preprint, arXiv:math/1011.1587. 

 
\bibitem[CS]{CS} M. Saito, W. E. Clark, {\it Longitudinal Mapping Knot Invariant for $SU (2)$},  arXiv:1802.08899

\bibitem[Dup]{Dup}J. L. Dupont, 
{\it Simplicial de Rham cohomology and characteristic classes of flat bundles}, Topology {\bf 15} (1976),
no. 3, 233--245.


\bibitem[Dup2]{Dup2}J. L. Dupont, 
{\it Scissors congruences, group homology and characteristic classes},
Nankai Tracts in Mathematics, vol. 1, World Scientific Publishing Co., Inc., River Edge, NJ, 2001.


\bibitem[DK]{DK}J.L. Dupont, F.W. Kamber, {\it On a generalization of Cheeger-Chern-Simons classes III}. J. Math.34, 221--255 (1990)

\bibitem[ESZ]{ESZ} M. Elhamdadi, M. Saito, E. Zappala
{\it Continuous cohomology of topological quandles}, preprint, arXiv:1803.07604

\bibitem[FHT]{FHT}
Y. F\'{e}lix, S. Halperin, J.-C. Thomas. {\it Rational homotopy theory}, Graduate Texts in Mathematics, {\bf 205}. Springer-Verlag, New York, 2001.

\bibitem[FRS1]{FRS1}
R. Fenn, C. Rourke, B. Sanderson,
{\it Trunks and classifying spaces}, 
Appl. Categ. Structures {\bf 3} (1995) 321--356.


\bibitem[FRS2]{FRS2} 
R. Fenn, C. Rourke, B. Sanderson,
{\it James bundles}, 
Proc. London Math. Soc. (3) {\bf 89} (2004) 217--240. 

\bibitem[FRS3]{FRS3} 
R. Fenn, C. Rourke, B. Sanderson,
{\it The rack space}, Trans. Amer. Math. Soc. {\bf 359} (2007), no. 2, 701--740.





\bibitem[IK]{IK}
A. Inoue, Y. Kabaya, {\it Quandle homology and complex volume}, 
Geometriae Dedicata {\bf 171} 1 (2014) 265--292.


\bibitem[Ishi]{Ishi}
K. Ishikawa, {\it On the classification of smooth quandles}, preprint 


\bibitem[Joy]{Joy} 
D. Joyce, 
{\it A classifying invariant of knots, the knot quandle},
J. Pure Appl. Algebra {\bf 23} (1982) 37--65.


\bibitem[LN]{LN} R. Litherland, S. Nelson, {\it The Betti numbers of some finite racks,} J. Pure Appl. Algebra {\bf 178} (2003), no. 2, 187--202. 

\bibitem[Kow]{Kow}O.Kowalski, {\it Generalized Symmetric Spaces}, Lecture Notes in Mathematics, {\bf 805}, Springer-Verlag, Berlin-New York, 1980.


\bibitem[KT]{KT} 
S. Kotschick, and D. Terzi\'{c}, {\it Geometric formality of homogeneous spaces and of biquotients}, Pacific J. Math. {\bf 249} (2011) 157--176,


\bibitem[Mat]{Mat}
S. Matveev, {\it Distributive groupoids in knot theory} (Russian), Math. USSRSobornik 
{\bf 47} (1982), 73--83.




\bibitem[Na]{Nagano}
T. Nagano, {\it The involutions of compact symmetric spaces, I and $\mathrm{I}\mathrm{I}$ }, Tokyo J. Math. {\bf 11}(1988), 57--79, {\bf 15}(1992), 39--82. 
\bibitem[NT]{NT}
T. Nagano and M. S. Tanaka, {\it The involutions of compact symmetric spaces},
III and IV, Tokyo J. Math. {\bf 18} (1995), 193--212, {\bf 22} (1999), 193--211. 

\bibitem[No]{Nos3}
T. Nosaka, {\it Longitudes in $SL_2$-representations of link groups and Milnor-Witt $K_2 $-groups of fields}, Annals of $K$-theory vol. 2 no. 2 2017, 211--232. 







\bibitem[NM]{NM}
Jos'{e} Antonio Arciniega-Nevarez and Jose Luis Cisneros-Molina, {\it Invariants of hyperbolic 3-manifolds in relative group homology}, available at arXiv:1303.2986.



\bibitem[Te]{Ra}
S. Terzi\'{c}, {\it On real cohomology generators of homogeneous spaces}, SARAJEVO JOURNAL OF MATHEMATICS {\bf 7} (20) (2011), 277-–287




\end{thebibliography}
\end{document}